\documentclass[11pt,reqno]{amsart}
%%% PACKAGES %%%%
\usepackage[utf8]{inputenc}  
\usepackage[T1]{fontenc}
\usepackage{graphicx}
\usepackage{cite}
\usepackage{amssymb, amsmath}
\usepackage[normalem]{ulem}

\usepackage{fixltx2e}
\usepackage{newunicodechar}
\usepackage{color,xcolor}
\usepackage[normalem]{ulem}

\usepackage{hyperref}

\newtheorem{thm}{Theorem}[section]
\newtheorem{lem}[thm]{Lemma}
\newtheorem{cor}[thm]{Corollary}
\newtheorem{prop}[thm]{Proposition}

\theoremstyle{definition}

\theoremstyle{remark} 
\newtheorem{rem}[thm]{Remark}
\newtheorem{rems}[thm]{Remarks}

\numberwithin{equation}{section}

\DeclareMathAlphabet{\mathpzc}{OT1}{pzc}{m}{it}

%\numberwithin{equation}{section}

\newcommand{\N}{\mathbb N}
\newcommand{\R}{\mathbb R}
\newcommand{\C}{\mathbb C}
\newcommand{\K}{\mathbb K}

\newcommand{\kL}{\mathcal L}
\newcommand{\ve}{\varepsilon}

\newcommand{\rd}{\mathrm{d}}
\newcommand{\p}{\partial}

\newcommand{\compact}{\mbox{\mbox{\raisebox{0.11cm}{$\scriptscriptstyle\subset
\hspace{-.76mm}\subset$}}\hspace{-1.7mm}$\longrightarrow$}}

\textwidth16.5cm
\oddsidemargin0.025cm
\evensidemargin0.025cm
\textheight21cm

\begin{document}

 \author[B.--V. Matioc]{Bogdan--Vasile Matioc}
\address{Fakult\"at f\"ur Mathematik, Universit\"at Regensburg,   93053 Regensburg, Germany.}
\email{bogdan.matioc@ur.de}

\author[Ch. Walker]{Christoph Walker}
\address{Leibniz Universit\"at Hannover,
Institut f\"ur Angewandte Mathematik,
Welfengarten 1,
30167 Hannover,
Germany.}
\email{walker@ifam.uni-hannover.de}

\title[Semilinear parabolic evolution problems with  H\"older continuous nonlinearities]{Global solutions for semilinear parabolic evolution problems with H\"older continuous nonlinearities}

\begin{abstract}
It is shown that semilinear parabolic evolution equations $u'=A+f(t,u)$  featuring  H\"older continuous nonlinearities $ f=f(t,u)$ with  at most linear growth
possess global strong solutions for a general class of initial data.  
The abstract results are applied  to a  recent  model describing front propagation in   bushfires  and   in the context of  a reaction-diffusion system.
\end{abstract}

\keywords{Global strong solutions; Semilinear parabolic problems; H\"older continuous semilinearity}
\subjclass[2020]{35A01; 35K58;   35Q92} 

\maketitle
%%%%%%%%%%%%%%%%%%%%%%%%%%%%%%%%%%%%%%%%%
  %%%%%%%%%%%%%%%%%%%%%%%%%%%%%%%%%%%%%%%%%
  %%%%%%%%%%%%%%%%%%%%%%%%%%%%%%%%%%%%%%%%%
%%%%%%%%%%%%%%%%%%%%%%%%%%%%%%%%%%%%
 %%%%%%%%%%%%%%%%%%%%%%%%%%%%%%%%%%%
 %%%%%%%%%%%%%%%%%%%%%%%%%%%%%%%%%%
\section{Introduction}\label{Sec:1}
%%%%%%%%%%%%%%%%%%%%%%%%%%%%%%%%%%%%%%%%%
  %%%%%%%%%%%%%%%%%%%%%%%%%%%%%%%%%%%%%%%%%
  %%%%%%%%%%%%%%%%%%%%%%%%%%%%%%%%%%%%%%%%%
%%%%%%%%%%%%%%%%%%%%%%%%%%%%%%%%%%%%
 %%%%%%%%%%%%%%%%%%%%%%%%%%%%%%%%%%%
 %%%%%%%%%%%%%%%%%%%%%%%%%%%%%%%%%%

For semilinear parabolic problems
\begin{equation}\label{EE}
u'=Au+f(t,u)\,,\quad t\in(0,T]\,,\qquad u(0)=u^0\,,
\end{equation}
involving a generator $A$ of an analytic semigroup on a Banach space $E_0$ and a locally  Lipschitz continuous nonlinearity $f$, there is a well-established theory 
for well-posedness  based on Banach's fixed point theorem, e.g. see \cite[Section~12]{Amann_Teubner} and the references therein.
 The situation is different if~$f=f(t,u)$ is not locally Lipschitz  with respect to $u$ and less seems to be known. 
 If compactness properties are available, (versions of) Schauder's fixed point theorem may be used to derive existence 
 -- but not uniqueness  -- results, see e.g. the textbooks~\cite{LSU,Lieberman} or also \cite{Arendt-Chill}. 
 We also refer to \cite{MeyerNeedham} for the treatment of non-Lipschitz semilinear parabolic scalar equations by comparison principle arguments.

The aim of this paper is to establish the existence of strong solutions to~\eqref{EE} 
in the context of nonlinearities~${f=f(t,u)}$ with at most linear growth in $u$,  which are not necessarily  locally Lipschitz continuous with respect to the variable  $u$,
under  optimal (i.e. low) regularity assumptions on the initial value. 
 
To be more precise,  in the following let $E_0$ and $E_1$ be two Banach spaces  over $\K\in \{\R,\C\}$ with compact, continuous, and dense embedding 
$$
E_1\,  \stackrel{d}{\compact}\, E_0\,. 
$$
 For each $\theta\in (0,1)$, let $(\cdot,\cdot)_\theta$ be an arbitrary admissible interpolation functor of 
 exponent $\theta$  (see \cite[I.~Section~2.11]{LQPP}) and denote by ${E_\theta:= (E_0,E_1)_\theta}$ the corresponding  interpolation space with norm~\mbox{$\|\cdot\|_\theta$}.
 Then, the embeddings
$$
E_1\,  \stackrel{d}{\compact}\, E_\theta\, \stackrel{d}{\compact}\, E_\vartheta \,  \stackrel{d}{\compact}\, E_0
\,,\qquad 0< \vartheta< \theta< 1\,,
$$
are all  dense and compact.
 We fix an infinitesimal generator $A:E_1\to E_0$  of a strongly continuous analytic semigroup~${\{e^{tA}\,:\, t\geq0\} }\subset{\kL(E_0)}$  
 and assume that $f:[0,T]\times E_\xi\to E_\gamma$ for some~${\gamma,\, \xi\in[0,1) }$ and~${T\in(0,\infty)}$.

 We first point out the following  result guaranteeing the global existence
  of (at least) a global strong solution to \eqref{EE} for initial data $u_0\in E_\alpha$ for which 
$f(0,u_0)$ is defined,  that is, for $\alpha\in[ \xi,1)$, under the general assumption that $f(t,u)$  grows at most linearly in $u$  and  is continuous 
(along with an additional H\"older continuity property in the  limiting case $\gamma=0$).

 \begin{prop}\label{MT0}
Let $\gamma,\,\xi \in [0,1)$  and assume that $f\in C\big([0,T]\times E_\xi,E_\gamma\big)$ is such that there is a constant $C>0$ with
\begin{equation}\label{w1}
\|f(t,u)\|_\gamma\le C\big(1+\|u\|_\xi\big)\,,\qquad t\in[0,T]\,,\quad  u\in E_\xi\,.
\end{equation}
If $\gamma=0$  assume additionally that there exists $\vartheta_0\in(0,1)$ such that for each $R>0$ there is a constant~${K(R)>0}$  with 
\begin{align}\label{w2MT0}
\|f(t,u)-f(s,v)\|_0\le K(R)\big(\vert t-s\vert^{\vartheta_0}+\|u-v\|_\xi^{\vartheta_0}\big),\qquad t,s\in [0,T]\,,\quad u,v\in \bar{\mathbb{B}}_{E_\xi}(0,R)\,.
\end{align}
Then, if $u^0\in E_\alpha $ for some $\alpha\in[\xi,1)$, the Cauchy problem~\eqref{EE} admits a solution 
\begin{equation*}
 u\in C^1\big((0,T],E_0\big)\cap C\big((0,T],E_1\big)\cap  C \big([0,T],E_\alpha\big)\,.
\end{equation*}
\end{prop}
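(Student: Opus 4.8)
The plan is to recast \eqref{EE} through the variation-of-constants formula, to produce a mild solution by a compactness (Schauder) argument, and then to upgrade it to a strong solution by parabolic smoothing. Throughout I would use the standard mapping properties of the analytic semigroup on the interpolation scale, namely $\|e^{tA}\|_{\kL(E_\gamma,E_\beta)}\le M t^{-(\beta-\gamma)}e^{\omega t}$ for $0\le\gamma\le\beta\le 1$, together with the (here crucial) strong continuity $e^{tA}x\to x$ in $E_\alpha$ as $t\to 0^+$ for every $x\in E_\alpha$, which follows from the density of $E_1$ in $E_\alpha$ and the uniform boundedness of $\{e^{tA}\}$ on $E_\alpha$. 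Setting
\[
\Phi(u)(t):=e^{tA}u^0+\int_0^t e^{(t-s)A}f\big(s,u(s)\big)\,\rd s\,,
\]
a fixed point $u\in C([0,T],E_\alpha)$ of $\Phi$ is exactly a mild solution of \eqref{EE}; note $\Phi(u)(0)=u^0$ and, since $\alpha\ge\xi$ and $\alpha-\gamma<1$, the integrand is integrable because $\|f(s,u(s))\|_\gamma\le C(1+\|u(s)\|_\alpha)$ by \eqref{w1} and $\|e^{(t-s)A}\|_{\kL(E_\gamma,E_\alpha)}\le M(t-s)^{-(\alpha-\gamma)}$.

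To set up Schauder's theorem I would first build an invariant set. For $u$ with $\|u(s)\|_\alpha\le\psi(s)$ the growth bound gives
\[
\|\Phi(u)(t)\|_\alpha\le M\|u^0\|_\alpha e^{\omega t}+C\int_0^t (t-s)^{-(\alpha-\gamma)}\big(1+\psi(s)\big)\,\rd s\,,
\]
so I would define $\psi$ as the nonnegative, nondecreasing, continuous solution on $[0,T]$ of the corresponding weakly singular Volterra equation taken with equality; since its right-hand side is affine in $\psi$, a generalized Gronwall argument shows that $\psi$ exists and is finite on all of $[0,T]$, and this is precisely where the \emph{linear} growth delivers a global (rather than merely local) bound. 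Then
\[
K:=\big\{u\in C([0,T],E_\alpha)\,:\,u(0)=u^0,\ \|u(t)\|_\alpha\le\psi(t)\ \text{for all } t\big\}
\]
is nonempty (it contains $t\mapsto e^{tA}u^0$), closed, bounded and convex, with $\Phi(K)\subseteq K$.

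Next I would verify the remaining hypotheses of Schauder's theorem on $K$. Continuity of $\Phi\colon K\to K$ follows from the continuity of $f$, the growth bound as a dominating function, and dominated convergence (upgraded to uniform-in-$t$ convergence using the equicontinuity below). For relative compactness of $\Phi(K)$ in $C([0,T],E_\alpha)$ I would apply the vector-valued Arzel\`a--Ascoli theorem: fixing $\beta\in(\alpha,1)$, the smoothing estimate bounds $\|\Phi(u)(t)\|_\beta$ uniformly over $u\in K$ for each fixed $t>0$ (the integral converges since $\beta-\gamma<1$ and $e^{tA}u^0\in E_\beta$ for $t>0$), so $\{\Phi(u)(t):u\in K\}$ is relatively compact in $E_\alpha$ by the compact embedding $E_\beta\compact E_\alpha$ (at $t=0$ the set is the single point $u^0$); equicontinuity in $t$ uniformly over $K$ comes from the standard H\"older-in-time estimates for the semigroup and for the weakly singular convolution, the behaviour near $t=0$ being controlled by the $u$-independent term $\|e^{tA}u^0-u^0\|_\alpha\to0$. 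Schauder's theorem then yields a fixed point $u\in K$, that is, a mild solution with $u\in C([0,T],E_\alpha)$.

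Finally I would bootstrap regularity to a strong solution on $(0,T]$, which I expect to be the main obstacle. Writing $g(t):=f(t,u(t))$, the issue is to make $g$ admissible as forcing for the linear problem $v'=Av+g$. When $\gamma>0$ this is immediate: $u\in C([0,T],E_\xi)$ and continuity of $f$ give $g\in C([0,T],E_\gamma)$, and the classical theory for analytic semigroups with continuous forcing valued in $E_\gamma$, $\gamma>0$, shows the convolution lies in $C^1((0,T],E_0)\cap C((0,T],E_1)$ and solves the equation; adding $e^{tA}u^0$ (analytic, hence smooth for $t>0$) gives the claimed regularity. When $\gamma=0$ continuity of $g$ into $E_0$ no longer suffices, so I would first use the mild formula and the smoothing estimates to prove that $u$ is locally H\"older continuous from $(0,T]$ into $E_\xi$ (on each $[\delta,T]$, splitting $u(t)-u(s)$ into the semigroup-difference term, controlled via $e^{sA}u^0\in E_{\xi+\eta}$ for $s\ge\delta$, and the two convolution terms, controlled by weakly singular estimates). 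The extra H\"older hypothesis \eqref{w2MT0} then transfers this to local H\"older continuity of $g$ into $E_0$, and the classical theory for H\"older continuous $E_0$-valued forcing yields a strong solution on each $[\delta,T]$, hence on $(0,T]$. Combined with $u\in C([0,T],E_\alpha)$ from the fixed-point step, this gives the asserted solution.
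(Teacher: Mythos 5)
Your proposal is correct and follows essentially the same route as the paper: a compactness fixed point for the variation-of-constants map in $C([0,T],E_\alpha)$, relative compactness via smoothing into $E_\beta$ with $\beta\in(\alpha,1)$ plus the compact embedding and the Arzel\`a--Ascoli theorem, and a bootstrap to a strong solution using the linear theory for analytic semigroups (continuous $E_\gamma$-valued forcing when $\gamma>0$; H\"older continuity of $t\mapsto f(t,u(t))$ in $E_0$ via \eqref{w2MT0} when $\gamma=0$). The only difference is cosmetic: you apply Schauder's theorem on an explicit invariant convex set built from a singular-Gronwall supersolution of the Volterra inequality, whereas the paper applies the Leray--Schauder theorem and obtains the same bound as an a priori estimate on the set $\{u=\lambda F(u)\}$ -- both hinge on exactly the same linear-growth--plus--singular-Gronwall estimate.
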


The main purpose of this paper is to extend the existence theory for the evolution problem~\eqref{EE} by  providing   a global strong solution to~\eqref{EE}
 for a general initial datum~$u^0$  in the ambient space ~$E_0$. 
 To this end we require, additionally to being  linearly bounded in $u$, that $f$ is locally H\"older continuous with respect to $u$.  
 More precisely, we  fix
 \begin{subequations}\label{ASS}
\begin{equation}\label{a1A}
0\leq \gamma<1\,,\qquad  0< \xi <\min\Big\{1,\frac{1}{q}\Big\}\,, \qquad 0<\vartheta_j\leq  q_j\,,\quad 1\leq j\leq   m\in\N^*\,,
\end{equation} 
where 
\[
q:=\max\{q_j\,:\, 1\leq j\leq m\}>0\,,
\]
and  assume that  $f\in C\big( (0,T]\times E_\xi, E_\gamma\big)$  is such that there exists a positive constant $C>0$ with
\begin{align}
&\|f(t,u)\|_\gamma\le C\big(1+\|u\|_\xi\big)\label{a1B1}
\end{align}
and
\begin{align}
&\|f(t,u)-f(t,v)\|_{\gamma}\le  C\sum_{j=1}^  m\big(1+\|u\|_{\xi}^{q_j-\vartheta_j}+\|v\|_{\xi}^{q_j-\vartheta_j}\big)\|u-v\|_{\xi}^{\vartheta_j}\label{a1B2}
\end{align}
\end{subequations}
 for all~${t\in (0,T]}$ and $u, v\in E_\xi$.
 In the particular case $\gamma=0$ we additionally require (similarly as in Proposition~\ref{MT0}, but in a slightly weaker form) that there exists~$\vartheta_0\in(0,1)$  such that for each~${R>0}$ there is a constant $K(R)>0$ with
\begin{equation}\label{a1B'}
\|f(t,u)-f(s,u)\|_{0}\le  K(R)\,|t-s|^{\vartheta_0},\qquad t, s\in (0,T]\,, \quad u\in  E_\xi\cap \bar{\mathbb{B}}_{E_0}(0,R)\,.
\end{equation}

 Our main result then reads:

\begin{thm}\label{MT1}
Suppose \eqref{ASS} and if $\gamma=0$ also assume \eqref{a1B'}.
Then,  given  $u^0\in E_0$, 
the Cauchy problem \eqref{EE} possesses a  global strong  solution
\begin{equation*} 
\begin{aligned}
 u&\in C^1\big((0,T],E_0\big)\cap C\big((0,T],E_1\big)\cap  C \big([0,T],E_0\big)\,.
\end{aligned}
\end{equation*}
Furthermore, if $u^0\in E_\alpha$ for some $\alpha\in [0,\min\{1,1+\gamma-\xi\})$, 
then  $C \big([0,T],E_\alpha\big)$.
\end{thm}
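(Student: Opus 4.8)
The plan is to use the instantaneous smoothing of the analytic semigroup $\{e^{tA}\}$ to reduce, after an arbitrarily short time, to the situation already settled in Proposition~\ref{MT0}. Concretely, I would first construct a local mild solution on a short interval $(0,\tau]$ emanating from the rough datum $u^0\in E_0$, then show that this solution regularizes instantaneously and is in fact a strong solution, so that $u(\tau)\in E_\alpha$ for some $\alpha\in[\xi,1)$, and finally insert $u(\tau)$ as initial value into Proposition~\ref{MT0} on the shifted interval $[\tau,T]$ and concatenate. The difficulty is entirely concentrated in the local step: since $u^0$ lies only in $E_0$ and $f(0,\cdot)$ need not be defined, Proposition~\ref{MT0} cannot be used near $t=0$, and since $f$ is merely H\"older in $u$ by~\eqref{a1B2} no contraction is available.

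For the local construction I would seek a fixed point of the Duhamel map
\[
\Phi[u](t):=e^{tA}u^0+\int_0^t e^{(t-s)A}f(s,u(s))\,\rd s
\]
in the weighted space $X_\tau:=\big\{u\in C\big((0,\tau],E_\xi\big):\|u\|_{X_\tau}:=\sup_{0<t\le\tau}t^\xi\|u(t)\|_\xi<\infty\big\}$, whose weight is dictated by the singular bound $\|e^{tA}\|_{\kL(E_0,E_\xi)}\lesssim t^{-\xi}$. Exploiting the smoothing estimate $\|e^{(t-s)A}\|_{\kL(E_\gamma,E_\xi)}\lesssim(t-s)^{-(\xi-\gamma)_+}$, the linear growth~\eqref{a1B1}, and $\|u(s)\|_\xi\le s^{-\xi}\|u\|_{X_\tau}$, a Beta-function computation bounds the Duhamel contribution to $t^\xi\|\Phi[u](t)\|_\xi$ by a multiple of $t^{1-(\xi-\gamma)_+}$, which vanishes as $t\to0$; hence $\Phi$ maps a closed ball of $X_\tau$ into itself once $\tau$ is small. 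As~\eqref{a1B2} is only H\"older, I would invoke Schauder's rather than Banach's theorem: the same estimates place the Duhamel part of $\Phi[u](t)$ in a bounded subset of some $E_\beta$ with $\xi<\beta<1$, so that relative compactness in $E_\xi$ follows from $E_\beta\compact E_\xi$, while continuity of $\Phi$ on $X_\tau$ is precisely where~\eqref{a1B2} and the standing restriction $\xi<1/q$ intervene, ensuring that the singular factors $s^{-\xi q_j}$ produced by~\eqref{a1B2} stay integrable against $(t-s)^{-(\xi-\gamma)_+}$. I expect the \emph{main obstacle} to be upgrading the pointwise relative compactness to relative compactness in the weighted space $X_\tau$, i.e.\ establishing the uniform tightness $\sup_{u}t^\xi\|\Phi[u](t)\|_\xi\to0$ as $t\to0$ together with equicontinuity on each $[\delta,\tau]$.

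Once a fixed point $u\in X_\tau$ is produced, the smoothing estimates show that $u(t)\in E_\alpha$ for every $t\in(0,\tau]$ and every $\alpha\in[\xi,1)$, and that $t\mapsto u(t)$ is locally H\"older continuous into $E_\xi$ on $(0,\tau]$; chaining this with~\eqref{a1B2} (and, in the borderline case $\gamma=0$, with the time-H\"older bound~\eqref{a1B'}) shows that $t\mapsto f(t,u(t))$ is locally H\"older continuous into $E_\gamma$, so the classical regularity theory for analytic semigroups upgrades $u$ to a strong solution on $(0,\tau]$. Fixing $\alpha\in[\xi,1)$, the datum $u(\tau)\in E_\alpha$ together with the continuity and linear growth of $f$ on the closed subinterval $[\tau,T]$ — where the time-H\"older hypothesis~\eqref{w2MT0} of Proposition~\ref{MT0} is recovered by splitting $f(t,u)-f(s,v)$ and combining~\eqref{a1B2} with~\eqref{a1B'} — permits applying Proposition~\ref{MT0} on $[\tau,T]$; concatenating the resulting solution with $u$ at $t=\tau$, where both one-sided derivatives equal $Au(\tau)+f(\tau,u(\tau))$, yields the asserted strong solution on $(0,T]$. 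Continuity up to $t=0$ in $E_0$ then follows from $e^{tA}u^0\to u^0$ in $E_0$ and the Duhamel bound $\lesssim t^{1-\xi}+t\to0$.

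For the final assertion, let $u^0\in E_\alpha$ with $\alpha\in[0,\min\{1,1+\gamma-\xi\})$. The strong continuity of $\{e^{tA}\}$ on the interpolation space gives $e^{tA}u^0\to u^0$ in $E_\alpha$ as $t\to0$, while the Duhamel term obeys $\big\|\int_0^t e^{(t-s)A}f(s,u(s))\,\rd s\big\|_\alpha\lesssim t^{1-(\alpha-\gamma)_+-\xi}+t^{1-(\alpha-\gamma)_+}$, which tends to $0$ exactly because $\alpha<\min\{1,1+\gamma-\xi\}$ forces $(\alpha-\gamma)_+<1-\xi$. Hence $u(t)\to u^0$ in $E_\alpha$, and together with $u\in C\big((0,T],E_1\big)\subset C\big((0,T],E_\alpha\big)$ this yields $u\in C\big([0,T],E_\alpha\big)$, completing the proof.
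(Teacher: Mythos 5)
Your architecture is viable, and it is genuinely different from the paper's: the paper runs a \emph{single} fixed point argument on all of $[0,T]$, applying the Leray--Schauder theorem in the weighted space $C_\mu((0,T],E_\xi)$ with exponent $\mu\in(\xi,\min\{1,1/q\})$, and obtains globality from an a priori bound on $\{u=\lambda F(u):\lambda\in[0,1]\}$ via the singular Gronwall inequality; you instead trade the a priori bound for smallness of $\tau$ (a Schauder self-map of a ball) and recover globality by continuation through Proposition~\ref{MT0} --- which is legitimate, since that proposition is independent of the theorem, and is in fact the same gluing mechanism the paper uses to prove Corollary~\ref{MT2}. Your weight $t^\xi$ (rather than $t^\mu$, $\mu>\xi$) is admissible exactly because \eqref{a1A} imposes $\xi q<1$, so the singularity $s^{-\xi q}$ produced by \eqref{a1B2} stays integrable; but note two consequences you should make explicit: the tightness $t^\xi\|e^{tA}u^0\|_\xi\to0$ is no longer automatic from the exponent and requires a density argument ($E_\xi\stackrel{d}{\hookrightarrow} E_0$ plus the uniform bound $t^\xi\|e^{tA}\|_{\kL(E_0,E_\xi)}\le M$), and in the equicontinuity step the term $(t^\xi-s^\xi)\|\Phi[u](t)\|_\xi$ yields only boundedness near $t=0$, so equicontinuity at $0$ must indeed come from uniform tightness --- you correctly anticipated this as the delicate point. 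Also, since the interpolation functor is arbitrary, the smoothing estimate reads $\|e^{sA}\|_{\kL(E_\gamma,E_\eta)}\lesssim s^{\gamma_0-\eta}$ only for $\gamma_0<\gamma$; your strict inequalities ($\xi<1/q$, $\alpha<\min\{1,1+\gamma-\xi\}$) absorb this $\varepsilon$-loss, as in the paper's $\gamma_0(\eta)$ device.

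Two steps need repair. First, for $\gamma>0$ your claim that $t\mapsto f(t,u(t))$ is locally H\"older continuous into $E_\gamma$ is unjustified: when $\gamma>0$ the hypotheses provide only \emph{continuity} of $f$ in $t$ (the time-H\"older condition \eqref{a1B'} is assumed solely in the case $\gamma=0$), and \eqref{a1B2} controls only the increment in $u$, so no rate in $t$ is available for the term $\|f(t,w)-f(s,w)\|_\gamma$. The conclusion nevertheless survives, because for $\gamma>0$ continuity with values in the intermediate space $E_\gamma$ already suffices for strong solvability of the shifted problem (this is \cite[II.Theorem~1.2.2, II.Remarks~2.1.2~(e)]{LQPP}, which is precisely what the paper invokes); H\"older-in-time is needed, and is available via \eqref{a1B'} combined with the H\"older continuity of $u$ into $E_\xi$, only when $\gamma=0$. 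Second, the concatenation at $t=\tau$: Proposition~\ref{MT0} gives a solution in $C^1$ only on the half-open interval to the right of its initial time, so ``both one-sided derivatives equal $Au(\tau)+f(\tau,u(\tau))$'' is not immediate from its statement. The standard repair --- carried out by the paper in the proof of Corollary~\ref{MT2} --- is to verify that the concatenated function satisfies the Duhamel formula on all of $(0,T]$ and then rerun the interior regularity argument on the glued mild solution. Your recovery of \eqref{w2MT0} on $[\tau,T]$ from \eqref{a1B2} and \eqref{a1B'} is fine modulo bookkeeping: the exponents $\vartheta_j$ and $\vartheta_0$ must be reduced to a common one on bounded sets, and the $E_0$-ball in \eqref{a1B'} contains every $E_\xi$-ball by the embedding. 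With these two repairs your proof is complete.
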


Theorem~\ref{MT1} provides strong solutions assuming minimal regularity  on the initial value. 
In particular,  $f(0,u^0)$ needs not be defined under the assumptions of Theorem~\ref{MT1} (in contrast to Proposition~\ref{MT0}).
For this reason, if $u^0\in E_\alpha\setminus E_\xi$, the  continuity  in $E_\alpha$ of the solution at $t=0$ is guaranteed only for  possibly restricted  $\alpha$.
 
 \begin{rems} 
{\bf (i)} If~$1\leq \vartheta_j\leq q_j$ for all  $1\leq j\leq  m $, then $f=f(t,u)$ is locally Lipschitz continuous in~$u$ and  the classical theory on well-posedness for semilinear parabolic equations applies, see  e.g.  \cite{Amann_Teubner} for a general treatment (or \cite{MW24} for a similar approach to the one chosen herein requiring minimal regularity assumptions on the initial value).\vspace{1mm}

{\bf (ii)}   In general, however, the nonlinearity $f=f(t,u)$ in~\eqref{EE} is not locally Lipschitz continuous in~$u$ as  
the exponents $\vartheta_j$ in~\eqref{a1B2} are allowed to be less than~$1$. 
Thus, our approach to Proposition~\ref{MT0} and Theorem~\ref{MT1},   relying on the Leray-Schauder fixed point theorem, does not provide  uniqueness of solutions.\vspace{1mm}

{\bf (iii)}  The linear bound \eqref{a1B1}  is not in contradiction with assumption~\eqref{a1B2} (even if $q$  is possibly larger than $1$).
  This is well exemplified in  Section~\ref{Sec:3} by application  \eqref{Bush} with $\nu=1$.
\end{rems}
 
Combining  Proposition~\ref{MT0} and Theorem~\ref{MT1}, we may establish  a global existence result for~${T=\infty}$. 
%In the following, let $\R^+:=[0,\infty)$.

\begin{cor}\label{MT2}
  Let  $\gamma,\,\xi \in [0,1)$,  $f\in C\big( [0,\infty)\times E_\xi, E_\gamma\big)$, and assume that for each $T>0$ there exists a  constant~$C(T)>0$ with 
 \begin{align}\label{linbound}
\|f(t,u)\|_{\gamma}\le   c(T) (1+\|u\|_{\xi})\,,\qquad t\in [0,T]\,,\quad u\in E_\xi\,.
\end{align}%\vspace{1mm}

{\bf (a)} Let $\gamma>0$. Then \eqref{EE} has a global strong solution
\begin{equation*} 
 u\in C^1\big((0,\infty),E_0\big)\cap C\big((0,\infty),E_1\big)\cap  C \big([0,\infty),E_\alpha\big)
\end{equation*}
provided either $u^0\in E_\alpha$ with $\alpha\in [\xi,1)$, or $f$ satisfies \eqref{ASS} for some $T=T_0>0$ and $u^0\in E_\alpha$ with~$\alpha\in [0,\min\{1,1+\gamma-\xi\})$.\vspace{1mm}

{\bf (b)} Let $\gamma=0$ and suppose
 there is $\vartheta_0\in (0,1)$ and for $T,R>0$ there is $K=K(T,R)>0$ such that
\begin{align}\label{w2}
\|f(t,u)-f(s,v)\|_0\le K \big(\vert t-s\vert^{\vartheta_0}+\|u-v\|_\xi^{\vartheta_0}\big)\,,\qquad t,s\in [0,T]\,,\quad u,v\in \bar{\mathbb{B}}_{E_\xi}(0,R)\,.
\end{align}
Then \eqref{EE} has a global strong solution
\begin{equation*} 
 u\in C^1\big((0,\infty),E_0\big)\cap C\big((0,\infty),E_1\big)\cap  C \big([0,\infty),E_\alpha\big)
\end{equation*}
provided  either $u^0\in E_\alpha$ with $\alpha\in [\xi,1)$, or 
$f$ satisfies \eqref{ASS}  and \eqref{a1B'} for some $T=T_0>0$ and~$u^0\in E_\alpha$ with~$\alpha\in [0,1-\xi)$.
\end{cor}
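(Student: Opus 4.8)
The plan is to assemble a solution on the unbounded interval $[0,\infty)$ out of solutions on consecutive bounded intervals, exploiting the parabolic smoothing which places the solution into $E_1$ after any positive time, so that from then on only Proposition~\ref{MT0}---available on arbitrarily large intervals---is needed. The decisive observation is that on every compact interval $[0,T]$ the growth bound~\eqref{linbound} holds with the finite constant $c(T)$, and, when $\gamma=0$, the H\"older estimate~\eqref{w2} holds on $[0,T]$ as well (yielding~\eqref{w2MT0} with $K(R):=K(T,R)$); hence the hypotheses of Proposition~\ref{MT0} are met on each finite interval, Theorem~\ref{MT1} serving only to generate the initial piece when the datum is merely in $E_0$ (or in a low-regularity $E_\alpha$).

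First I would produce the initial piece. If $u^0\in E_\alpha$ with $\alpha\in[\xi,1)$, I apply Proposition~\ref{MT0} on $[0,1]$ and set $S_1:=1$; under the alternative hypothesis I invoke Theorem~\ref{MT1} on $[0,T_0]$ (using~\eqref{a1B'} when $\gamma=0$) and set $S_1:=T_0$, obtaining in both cases a solution on $[0,S_1]$ with $u\in C([0,S_1],E_\alpha)$ and, by the smoothing, $u(S_1)\in E_1$. I then iterate a continuation step: given the solution built up to a time $S_k$ with $u(S_k)\in E_1\hookrightarrow E_\beta$ for a fixed $\beta\in[\xi,1)$, I consider the shifted nonlinearity $g(t,v):=f(S_k+t,v)$ on $[0,1]\times E_\xi$, which inherits continuity, the linear bound~\eqref{linbound} with constant $c(S_k+1)$, and (for $\gamma=0$) the H\"older estimate~\eqref{w2}. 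Proposition~\ref{MT0} then provides a solution $v_k$ of $v'=Av+g(t,v)$ with $v(0)=u(S_k)$ on $[0,1]$, and I define $u(t):=v_k(t-S_k)$ for $t\in[S_k,S_{k+1}]$, $S_{k+1}:=S_k+1$.

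It remains to check that the concatenation is a genuine solution of~\eqref{EE} of the asserted regularity on $(0,\infty)$. Continuity is clear from $v_k(0)=u(S_k)$. At each junction $S_k$ the two adjacent pieces share the value $u(S_k)\in E_1$, and both one-sided derivatives in $E_0$ equal $Au(S_k)+f(S_k,u(S_k))$: the left derivative because the preceding piece solves~\eqref{EE} up to and including $S_k$, and the right derivative because the restart datum lies in the domain $E_1$ of $A$ and $f$ is continuous, whence the variation-of-constants representation of $v_k$ is right-differentiable at $0$ with the expected value. Thus $u\in C^1((0,\infty),E_0)\cap C((0,\infty),E_1)$ solves~\eqref{EE} through every junction, and since $S_k=S_1+(k-1)\to\infty$ the construction covers all of $[0,\infty)$.

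The point I expect to require the most care is the continuity in $E_\alpha$ up to $t=0$ together with its persistence across the junctions. On $[0,S_1]$ this is given directly by Proposition~\ref{MT0} or Theorem~\ref{MT1}; on each continuation interval Proposition~\ref{MT0} only yields $u\in C([S_k,S_{k+1}],E_\beta)$, but choosing $\beta:=\max\{\alpha,\xi\}\in[\xi,1)$ guarantees $\beta\ge\alpha$, so that $E_\beta\hookrightarrow E_\alpha$ and hence $u\in C([S_k,S_{k+1}],E_\alpha)$. As the one-sided limits at each $S_k$ agree with the common value $u(S_k)$, the pieces assemble into $u\in C([0,\infty),E_\alpha)$, which completes both part~(a) and part~(b); the two cases differ only in whether the H\"older hypotheses~\eqref{w2} and~\eqref{a1B'} must be invoked to license the applications of Proposition~\ref{MT0} and Theorem~\ref{MT1} when $\gamma=0$.
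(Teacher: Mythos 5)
Your overall scheme coincides with the paper's: generate the first piece via Theorem~\ref{MT1} (or Proposition~\ref{MT0} when $\alpha\in[\xi,1)$), use the smoothing $u(S_k)\in E_1\hookrightarrow E_\beta$ to restart with Proposition~\ref{MT0} on intervals of fixed length, and concatenate; your choice $\beta=\max\{\alpha,\xi\}$ for the $C([0,\infty),E_\alpha)$ claim is also fine. The genuine gap is at the junctions. Matching one-sided derivatives at $S_k$ (your right-derivative computation for the variation-of-constants formula is correct as far as it goes) only yields differentiability of $u$ \emph{at the point} $S_k$; the corollary asserts $u\in C^1((0,\infty),E_0)\cap C((0,\infty),E_1)$, and for that you need $v_k(t)\to u(S_k)$ in $E_1$ as $t\downarrow 0$ --- equivalently, right-continuity of $Au$, hence of $u'$, at $S_k$. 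Proposition~\ref{MT0} does not provide this: it gives $v_k\in C((0,1],E_1)\cap C([0,1],E_\beta)$ only for $\beta<1$, so continuity \emph{into $E_1$} at the restart time is exactly what is missing. For $\gamma>0$ this is repairable from the linear theory (e.g.\ \cite[II.Theorem~1.2.2]{LQPP}), since the restart datum lies in $E_1$ and $f(\cdot+S_k,v_k)\in C([0,1],E_\gamma)$ with $\gamma>0$. But for $\gamma=0$ your justification ``the restart datum lies in $E_1$ and $f$ is continuous'' would fail: with merely continuous $E_0$-valued forcing, a mild solution need not be a strict solution near $t=0$ even for initial data in $E_1$. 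There you must invoke \eqref{w2} together with the time-H\"older continuity of $v_k$ in $E_\xi$ (which the proof of Proposition~\ref{MT0} supplies) to get H\"older-in-time forcing, and then apply \cite[II.Theorem~1.2.1]{LQPP} --- precisely the mechanism the paper uses in the $\gamma=0$ endgame of Theorem~\ref{MT1}.

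The paper sidesteps one-sided derivative matching entirely: it verifies that the concatenated function satisfies the fixed-point equation \eqref{ttp} on the whole interval $[0,2T_0]$ (the two mild formulations splice together under the semigroup property) and then reruns the final regularity bootstrap from the proof of Theorem~\ref{MT1}, which delivers $C^1((0,2T_0],E_0)\cap C((0,2T_0],E_1)$ across the junction in one stroke, before extending inductively. I recommend you adopt that device, or else insert the linear-theory argument above at each $S_k$; with that repair your proof is complete and otherwise follows the paper's route.
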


  The proofs of  Proposition~\ref{MT0}, Theorem~\ref{MT1}, and Corollary~\ref{MT2} are presented in Section~\ref{Sec:2}. 
 In  Section~\ref{Sec:3} we  apply the abstract results to models describing front propagation in bushfires  and to reaction-diffusion models.

%%%%%%%%%%%%%%%%%%%%%%%%%%%%%%%%%%%%%%%%%%%%%%%%%%%%%%%%%%%%%%%%%%%%
%%%%%%%%%%%%%%%%%%%%%%%%%%%%%%%%%%%%%%%%%%%%%%%%%%%%%%%%%%%%%%%%%%%%
%%%%%%%%%%%%%%%%%%%%%%%%%%%%%%%%%%%%%%%%%%%%%%%%%%%%%%%%%%%%%%%%%%%%
\section{Proof of the main results}\label{Sec:2}
%%%%%%%%%%%%%%%%%%%%%%%%%%%%%%%%%%%%%%%%%%%%%%%%%%%%%%%%%%%%%%%%%%%%
%%%%%%%%%%%%%%%%%%%%%%%%%%%%%%%%%%%%%%%%%%%%%%%%%%%%%%%%%%%%%%%%%%%%
%%%%%%%%%%%%%%%%%%%%%%%%%%%%%%%%%%%%%%%%%%%%%%%%%%%%%%%%%%%%%%%%%%%%

  We provide first the details of the proof Theorem~\ref{MT1} (for which  the arguments are a bit more delicate)
  and subsequently only sketch the proofs of  Proposition~\ref{MT0}  and Corollary~\ref{MT2}.\\

Since $A$ is the generator of the strongly continuous analytic semigroup $\{e^{tA}\,:\, t\geq0\}$,  the  Cauchy problem \eqref{EE} can be formulated (under suitable assumptions on $u$ and $f$, see e.g.~\cite[II.~Remarks~2.1.2~(a)]{LQPP}) as the  fixed point equation 
\begin{equation}\label{ttp}
u(t)=e^{tA}u^0+ \int_0^t e^{(t-\tau)A} f(\tau,u(\tau))\,\rd \tau\,,\quad t\in[0,T]\,,
\end{equation}
which we shall solve  using the  Leray-Schauder fixed point theorem.

\subsection{Proof of Theorem \ref{MT1}}

Assuming the premises of Theorem \ref{MT1}, we set
\[
\vartheta:=\min\{\vartheta_j\,:\, 1\leq j\leq  m \}>0
\] 
and choose $\mu\in\R$ such that
\begin{equation}\label{much}
 \xi<\mu<\min\Big\{1,\frac{1}{q}\Big\}\,.
\end{equation}
We denote by $$X_T:=C_{\mu}((0,T],E_\xi)$$ the Banach space consisting of all functions~${u\in C((0,T],E_\xi)}$ such that 
$t^{\mu} u(t)\rightarrow 0$ in $E_\xi$ as~$t\rightarrow 0$, which is equipped with the norm
\begin{equation*}
  \|u\|_{X_T}:= \sup\big\{ t^{\mu}\, \|u(t)\|_{\xi} \,:\, t\in (0,T]\big\}\,.
\end{equation*}
Our goal is to prove that the function~${F:X_T\to X_T}$,
defined by 
\begin{equation}\label{fpe}
F(u)(t):=e^{tA}u^0+ \int_0^t e^{(t-\tau)A} f(\tau,u(\tau))\,\rd \tau\,,\qquad t\in[0,T]\,,\quad u\in X_T\,,
\end{equation}
satisfies the hypotheses of the Leray-Schauder fixed point theorem  \cite[Theorem 11.3]{Gilbarg2001} and thus admits a fixed point in $X_T$,  which is  then shown to be a global strong solution to \eqref{EE}.\medskip
 
\noindent{\bf Well-definedness.} In order to establish that $F(u)\in X_T$ for each $u\in X_T$, we first recall  from~\cite[II.~Lemma~5.1.3]{LQPP} the estimates 
\begin{equation}\label{e6}
\|e^{tA}\|_{\mathcal{L}(E_\theta)}+  t^{\alpha-\beta_0}\,\|e^{tA}\|_{\mathcal{L}(E_\beta,E_\alpha)} 
 \le M
\,, \qquad 0\le  t\le T\,, 
\end{equation}
 for $\theta\in[0,1]$ and $0\le \beta_0\le \beta\leq \alpha\le 1$ 
 with $\beta_0<\beta$ if $0<\beta<\alpha<1$,  where $M=M(T)$ depends also on these parameters.

Given $u,\, v\in X_T$ with $\|u\|_{X_T}\leq L$ and $\|v\|_{X_T}\leq L$ (for some arbitrary $L>0$),  assumption~\eqref{a1B2} implies 
\begin{align}\label{e9}
\|f(t,u(t))-f(t,v(t))\|_{\gamma}&\le c(L)\sum_{j=1}^m  \big( 1+t^{-\mu (q_j-\vartheta_j)}\big) \|u(t)-v(t)\|_{\xi}^{\vartheta_j}\nonumber\\
&\leq c (T,L)\, t^{-\mu q }\, \|u-v\|_{X_T}^{\vartheta}\,,\qquad t\in (0,T]\,,
\end{align}
 while \eqref{a1B1} yields
\begin{align}\label{e8}
\|f(t,u(t))\|_{\gamma}\le   c(T,L)\, t^{-\mu }\,,\quad t\in (0,T]\,.
\end{align}

Fix $u\in X_T$ with $\|u\|_{X_T}\leq L$ (for some arbitrary $L$). 
Let   $\eta\in(0,1)$ be given and define $\gamma_0=\gamma_0(\eta)$ as 
\begin{equation}\label{gamma_0}
\text{$\gamma_0:=0$\, if\, $\gamma=0$ \qquad and\qquad $\gamma_0\in(0,\min\{\gamma,\, \eta\})$\, if\, $\gamma>0$\,.}
\end{equation}
In view of \eqref{e6} and \eqref{e8}  we obtain
\begin{align}
\|F(u)(t)\|_\eta &\le \|e^{tA}\|_{\mathcal{L}(E_0,E_\eta)}\, \|u^0\|_{0}
+\int_0^t \|e^{(t-\tau)A}\|_{\mathcal{L}(E_\gamma,E_\eta)} \,\|f(\tau,u(\tau))\|_\gamma\,\rd \tau \nonumber\\
&\le   c t^{-\eta}\, \|u^0\|_{0} +  c(T,L) \int_0^t (t-\tau)^{\gamma_0-\eta}\tau^{-\mu }\,\rd \tau \nonumber\\
&\le   c(T,L) \big( t^{-\eta}+ t^{1+\gamma_0-\eta -\mu }{\sf B}(1+\gamma_0-\eta,1-\mu )\big)\label{eqeta}
\end{align}
for $t\in (0,T]$,  where ${\sf B}$ denotes the Beta function.
Therefore, for $\eta=\xi$,  we obtain
\begin{align}\label{o1}
t^\mu \|F(u)(t)\|_\xi &\le     c(T,L)\big( t^{\mu-\xi} +  t^{1+\gamma_0-\xi}\big)\,,\quad t\in (0,T]\,,
\end{align}
and the right hand side converges to zero as $t\to0$ since  $\xi<\mu <1$.

Given $\ve\in (0,T)$, set $u_\ve(t):=u(t+\ve)$ for $t\in [0,T-\ve]$. 
Then, $u_\ve\in C([0,T-\ve],E_\xi)$ and the continuity of $f$ implies $f(\cdot+\ve,u_\ve)\in C([0,T-\ve],E_\gamma)$.
 Moreover, the definition of~$F(u)$  entails
\begin{equation}\label{P1d}
F(u)(t+\ve)=e^{tA}F(u)(\ve)+\int_0^t e^{(t-\tau)A} f(\tau+\ve,u_\ve(\tau))\,\rd \tau\,,\quad t\in [0,T-\ve]\,.
\end{equation}
 Hence, \cite[II.Theorem~5.3.1]{LQPP}  yields
\begin{equation*}  
F(u)(\cdot+\ve)\in C\big((0,T-\ve],E_\theta\big)\,,\qquad \theta\in (0,1)\,,\quad \ve\in(0,T)\,,
\end{equation*}
that is,
\begin{equation} \label{Xregdes2}
F(u)\in C\big((0,T],E_\theta\big)\,,\quad \theta\in (0,1)\,.
\end{equation}
Together with \eqref{o1}  we conclude $F(u)\in X_T$.\medskip

\noindent{\bf  Continuity.}
 Given $u,\,v\in X_T$  with  $\|u\|_{X_T}\leq L $ and  $\|v\|_{X_T}\le L$, we obtain in view of \eqref{e6} and~\eqref{e9}, similarly as above,  that
\begin{align}
\|F(u)(t)-F(v)(t)\|_\xi&\le\int_0^t \|e^{(t-\tau)A}\|_{\mathcal{L}(E_\gamma,E_\xi)} \,\|f(\tau, u(\tau))-f(\tau, v(\tau))\|_\gamma\,\rd \tau\nonumber\\
%&\le c\|u-v\|_{X_T}^\vartheta\int_0^t (t-\tau)^{\gamma_0-\xi} \tau^{-\mu q}\,\rd \tau\nonumber\\
&\le  c\|u-v\|_{X_T}^\vartheta {\sf B}(1+\gamma_0-\xi, 1-\mu q)t^{1+\gamma_0-\xi  - \mu q },\qquad   t\in(0,T],\label{c111}
\end{align}
with $\gamma_0=\gamma_0(\xi)$ as in \eqref{gamma_0}.
Since $\mu q<1$ and $\xi<\mu<1$, we get 
$$
\|F(u)-F(v)\|_{X_T}\leq c_1(T,L)\,\|u-v\|_{X_T}^\vartheta\,.
$$ 
This proves in particular the (H\"older) continuity of $F$.
\medskip

\noindent{\bf Compactness.} We show that $F:X_T\to X_T$ is compact. 
Let therefore $(u_k)_k$ be a bounded sequence in $X_T$ and choose $L>0$ such that $\|u_k\|_{X_T}\leq L$
for all $k\in\N$. We prove that $(F(u_k))_k$ is relatively compact in $X_T$ by applying the 
Arzel\`a-Ascoli theorem to the sequence $(v_k)_k\subset C([0,T], E_\xi)$  defined as
\[
v_k(t):=t^\mu F(u_k)(t)\,,\qquad t\in[0,T]\,, \quad k\in\N\,.
\] 
Note that $v_k(0)=0$ for $k\in\N$.
Since $(v_k(t))_k$ is bounded in $E_\eta$  for~$\eta\in(\xi,\mu]$ by~\eqref{eqeta}, 
the compactness of the embedding $E_\eta\hookrightarrow E_\xi$ ensures that $(v_k(t))_k$ is  relatively compact in $E_\xi$ for each~${t\in[0,T]}$. 

We next prove that the sequence~$(v_k)_k$ is equi-continuous.
Given~$0< s<t\le T$ and $k\in\N$, we have
\begin{subequations}\label{gfg}
\begin{equation}\label{gfg0}
\begin{aligned}
\|v_k(t)-v_k(s)\|_\xi&\le (t^\mu-s^\mu)\|F(u_k)(t)\|_\xi
+s^\mu\int_s^t \|e^{(t-\tau)A}\|_{\mathcal{L}(E_\gamma,E_\xi)}
 \,\|{ f(\tau,u_k(\tau))}\|_\gamma\,\rd \tau\\
&\quad +s^\mu\int_0^s \|e^{(t-\tau)A}-e^{(s-\tau)A}\|_{\mathcal{L}(E_\gamma,E_\xi)}
 \,\|{ f(\tau,u_k(\tau))}\|_\gamma\,\rd \tau \\	
 &\quad +s^\mu \, \|(e^{tA}-e^{sA})u^0\|_{\xi}=:I_1+I_2+I_3+ I_4\,.
\end{aligned}
\end{equation}
 In the following, we simply write $c$ for positive constants $c=c( T,L)$ depending on $T$ and $L$.
From~\eqref{o1}, we obtain 
$$
\|F(u_k)(t)\|_\xi\leq  c t^{-\xi}\,,\qquad t\in(0,T]\,,\quad k\in\N\,,
$$ 
and therefore
\begin{align}\label{gfg1}
I_1 \le ct^{-\xi}(t^\mu-s^\mu)\le ct^{-\xi}(t -s )^\mu \leq  c(t-s)^{\mu-\xi}\,.
\end{align}
 Using \eqref{e6} and~\eqref{e8}, we deduce 
\begin{align}
I_2 &\le    cs^\mu\int_s^t (t-\tau)^{\gamma_0-\xi}  \,\tau^{-\mu}\,\rd \tau \nonumber\\
&\leq cs^\mu(t-s)^{1+\gamma_0-\xi}\int_0^1 (1-r)^{\gamma_0-\xi}(r(t-s)+s)^{ -\mu}\,\rd\tau \nonumber\\
&\le   c(t-s)^{1+\gamma_0-\xi}\,.\label{gfg3}
\end{align}
Let  now $\eta\in  (\xi, \mu)$ be fixed  and $\gamma_0=\gamma_0(\eta)$ as in \eqref{gamma_0}.
Since 
\begin{align}\label{star}
\|e^{(t-s)A}-1\|_{\mathcal{L}(E_\eta,E_\xi)}\le c(t-s)^{\eta-\xi}
\end{align}
due to  \cite[II.~Theorem~5.3.1]{LQPP} (with $f=0$ therein),
 we use \eqref{e6} and \eqref{e8} to derive
\begin{align}
I_3 &\le s^\mu\int_0^s \|e^{(t-s)A}-1\|_{\mathcal{L}(E_\eta,E_\xi)} \,\|e^{(s-\tau)A}\|_{\mathcal{L}(E_\gamma,E_\eta)} \|f(\tau,u_k(\tau))\|_\gamma\,\rd \tau \nonumber\\
&\le c s^\mu(t-s)^{\eta-\xi}\int_0^s (s-\tau)^{\gamma_0-\eta}\, \tau ^{-\mu}\,\rd \tau \nonumber\\
&\le  c s^{1+\gamma_0-\eta}{\sf B}(1+\gamma_0-\eta, 1-\mu)\, (t-s)^{\eta-\xi}  \nonumber\\
&\le  c\, (t-s)^{\eta-\xi}\,. \label{gfg2}
\end{align}
Finally,  from ~\eqref{star} and~\eqref{e6} we get
\begin{align}
I_4 &\le s^\mu \|e^{(t-s)A}-1\|_{\mathcal{L}(E_\eta,E_\xi)} \,\|e^{ s A}\|_{\mathcal{L}(E_0,E_\eta)} \|u^0\|_0 \nonumber\\
&\le c T^{\mu-\eta}(t-s)^{\eta-\xi}\,\|u^0\|_0 \,. \label{gfg4}
\end{align}
\end{subequations}
We thus conclude  from~\eqref{gfg} that the sequence~$(v_k)_k$ is equi-continuous.
The Arzel\`a-Ascoli theorem  now ensures (up to a subsequence) that $v_k\to v$ 
in~${C([0,T],E_\xi)}$ for some function $v$ with $v(0)=0.$
Defining $u(t):=t^{-\mu}v(t),$ $t\in(0,T]$, we obtain~${F(u_k)\to u}$ in~$X_T$.
This establishes the compactness of $F$.\medskip

\noindent{\bf A priori bound.} We prove that the set 
$$
S:=\big\{u\in X_T\,:\, \text{$u=\lambda F(u)$ for some $\lambda\in[0,1]$}\big\}
$$
is bounded in $X_T$. To this end we infer from  \eqref{a1B1} and \eqref{e6} that   for $u\in S$ and $t\in (0,T]$ we have
\begin{equation*}
\begin{aligned}
\|u(t)\|_\xi&\leq\|F(u)(t)\|_\xi \le \|e^{tA}\|_{\mathcal{L}(E_0,E_\xi)}\, \|u^0\|_{0}
+\int_0^t \|e^{(t-\tau)A}\|_{\mathcal{L}(E_\gamma,E_\xi)} \,\|f(\tau,u(\tau))\|_\gamma\,\rd \tau\\
&\le   c  t^{-\xi} + c \int_0^t (t-\tau)^{\gamma_0-\xi}\big(1+\|u(\tau)\|_\xi\big)\,\rd \tau\\
&\le   c  t^{-\mu} + c \int_0^t (t-\tau)^{\gamma_0-\xi}\|u(\tau)\|_\xi\,\rd \tau\,,
\end{aligned}
\end{equation*}
with $\gamma_0=\gamma_0(\xi)$ defined as in \eqref{gamma_0}.
The singular Gronwall inequality \cite[II.Corollary 3.3.2]{LQPP} now implies the existence of a constant $K=K(T)>0$
such that 
\[
\|u(t)\|_\xi\leq K t^{-\mu}\,,\qquad t\in (0,T]\,,
\] 
which entails the boundedness of $S$ in $X_T$.\medskip

\noindent{\bf Existence of a solution.}
Due to the above considerations, we may apply Leray-Schauder's fixed point theorem  \cite[Theorem 11.3]{Gilbarg2001} 
to deduce that there exists $u\in X_T$ such that $F(u)=u$. 
It remains to prove that $u$ is a strong solution to the Cauchy problem \eqref{EE}.

To this end, we note that if $u_0\in E_\alpha$ and $\alpha\in[0, \min\{1,1+\gamma-\xi\})$, then~${u\in C([0,T], E_\alpha)}$.
 Indeed, the continuity of $u$ on $(0,T]$ is established in \eqref{Xregdes2}.
Since~${\big[t\mapsto e^{tA}u_0\big]\in C( [0,T],E_\alpha)}$, it follows from \eqref{eqeta} 
(with $u_0=0$, $\eta=\alpha$, and $\gamma_0=\gamma_0(\alpha)$ and $ \mu$ chosen such that  
$\alpha<1+\gamma_0-\mu$) that~${u=F(u)\in C([0,T], E_\alpha)}$, in particular $u(0)=u^0$.

Given $\ve\in (0,T)$, we  set as before $u_\ve(t):=u(t+\ve)$ for $t\in [0,T-\ve]$ and obtain from  the continuity of $f$  that $f(\cdot+\ve,u_\ve)\in C([0,T-\ve],E_\gamma)$.

 If~${\gamma>0}$, we may apply   \cite[II.Theorem~1.2.2, II.Remarks~2.1.2 (e)]{LQPP} to conclude from \eqref{P1d} that 
\[
u_\ve=F(u)(\cdot+ \ve) \in  C\big((0,T-\varepsilon],E_1\big)\cap C^1\big((0,T-\ve],E_0\big)\,,\quad \ve\in(0,T)\,,
\]
 hence 
\begin{equation}\label{regdes}
u\in C\big((0,T],E_1\big)\cap C^1\big((0,T],E_0\big)
\end{equation}
is a strong solution to \eqref{EE}.

If~$\gamma=0$,  we have  $u_\ve(0)\in E_\theta$ for some ~$\theta\in (\xi,1)$ by \eqref{Xregdes2} and $f(\cdot+\ve, u_\ve)\in C([0,T-\ve],E_0)$ so that \cite[II.Theorem~5.3.1]{LQPP}  together with~\eqref{P1d} 
imply  $u_\ve\in C^{\theta-\xi}([0,T-\ve],E_\xi)$. 
Along with~\eqref{a1B2} and \eqref{a1B'} we conclude~$f(\cdot+\ve, u_\ve)\in C^{\rho}([0,T-\ve],E_0)$ with $\rho:=\min\{\vartheta_0,\vartheta(\theta-\xi)\}\in(0,1)$.
   Invoking now \cite[II.Theorem~1.2.1]{LQPP} we deduce from \eqref{P1d} 
    \[
    u_\ve\in   C\big((0,T-\varepsilon],E_1\big)\cap C^1\big((0,T-\ve],E_0\big)
    \]
for each $\ve\in (0,T)$, hence $u$ is a strong solution to \eqref{EE} enjoying  the regularity properties \eqref{regdes}. This proves Theorem~\ref{MT1}.
 \qed

\subsection{Proof of Proposition~\ref{MT0}} 

We now provide the proof of Proposition~\ref{MT0}, which relies on the same strategy as that of Theorem~\ref{MT1}, the main difference being that the Leray-Schauder fixed point theorem
 is now applied in the Banach space $C([0,T],E_\alpha)$ of continuous  functions in $E_\alpha$.

Assume the premises of Proposition~\ref{MT0}. Set $X_T:=C([0,T],E_\alpha)$ and let $u\in X_T$.
 Since $\alpha\geq\xi$, the continuity of $f$ ensures~$f(\cdot,u)\in C([0,T],E_\gamma)$, so that $F(u)$ in \eqref{fpe} satisfies 
\begin{equation*}
F(u)\in C^{\alpha-\beta}\big([0,T],E_\beta\big)\cap C\big((0,T],E_\theta\big)\,,\qquad \beta\in [0,\alpha]\,,\quad \theta\in (0,1)\,,
\end{equation*}
due to  \cite[II.Theorem~5.3.1]{LQPP}  and~\eqref{Xregdes2}.
In particular,  $F(u)$ is an element of $X_T$.
Moreover, a straightforward contradiction argument shows that given $\ve>0$ arbitrary, there is $\delta>0$ such that
\begin{equation*}
\|f(t,u(t))-f(t,v(t))\|_{\gamma}\le  \ve\,,\qquad t\in [0,T]\,,\quad v\in \bar{\mathbb{B}}_{X_T}(u,\delta)\,.
\end{equation*}
From this we deduce, analogously to \eqref{c111}, that
\begin{align*}
\|F(u)(t)-F(v)(t)\|_\alpha&\le c(T) \ve\,,\qquad t\in [0,T]\,,\quad v\in \bar{\mathbb{B}}_{X_T}(u,\delta)\,,
\end{align*}
and thus the continuity of $F:X_T\to X_T$. 

To establish the  compactness of $F$, let ${(u_k)_k}$ be a sequence  in $X_T$ with $\|u_k\|_{X_T}\le L$ for some~${L>0}$. 
Then, in view of~\eqref{w1},  \eqref{gfg} reduces to
\begin{align*}
\|F(u_k)(t)-F(u_k)(s)\|_\alpha&\le \int_s^t \|e^{(t-\tau)A}\|_{\mathcal{L}(E_\gamma,E_\alpha)}
 \,\|{ f(\tau,u_k(\tau))}\|_\gamma\,\rd \tau\\
&\quad   +\int_0^s \|e^{(t-\tau)A}-e^{(s-\tau)A}\|_{\mathcal{L}(E_\gamma,E_\alpha)}
 \,\|{ f(\tau,u_k(\tau))}\|_\gamma\,\rd \tau\\
 &\quad   +\|(e^{tA}-e^{sA})u^0\|_\alpha\\
& \le c(T,L)(t-s)^{\eta-\alpha}+\|(e^{tA}-e^{sA})u^0\|_\alpha,  \qquad k\in\N\,, \quad 0< s<t\le T\,, 
\end{align*}
  where $\eta\in (\alpha,1)$ and $[t\mapsto e^{tA}u_0]\in C([0,T], E_\alpha)$.
  This establishes the equi-continuity of~${(F(u_k))_k}$.
  Moreover, arguing as in  \eqref{eqeta}, we also have (due to \eqref{w1}),  for some fixed $\eta\in (\alpha,1)$,
 \begin{equation*}
\|F(u_k)(t)\|_\eta \le   c(T,L)   t^{\alpha-\eta} ,  \qquad k\in\N\,, \quad 0<t\le T\,.
\end{equation*} 
  The Arzel\`a-Ascoli theorem now  guarantees, in view of the compactness of the embedding $E_\eta\hookrightarrow E_\alpha$, that~${(F(u_k))_k}$ is relatively compact in $X_T$. 
  
   Finally, the proof of an a priori bound for the set 
$$
S:=\big\{u\in X_T\,:\, \text{$u=\lambda F(u)$ for some $\lambda\in[0,1]$}\big\}
$$ 
is the same as in  Theorem~\ref{MT1} (using $u^0\in E_\alpha$ and \eqref{w1}). 
   Thus, $F$ admits a fixed point $u\in X_T$, which is a strong solution to~\eqref{EE} by the same arguments as in the last part of the proof of Theorem~\ref{MT1},  taking directly $\ve=0$ there if $\gamma>0$, respectively using \eqref{w2MT0} in the case $\gamma=0$. 
    This proves Proposition~\ref{MT0}.
\qed\medskip

We conclude this section by providing the proof for Corollary~\ref{MT2}.

 \subsection{Proof of Corollary~\ref{MT2}}

We first prove the claim for $u^0\in E_\alpha$ with $\alpha\in [0,\min\{1,1~+~\gamma~-~\xi\})$.
 Since $f$ satisfies \eqref{ASS}, and if $\gamma=0$ also~\eqref{a1B'} for some~$T_0>0$, Theorem~\ref{MT1}  guarantees the existence of a strong solution
 \begin{equation*} 
\begin{aligned}
 u_1&\in C^1\big((0,T_0],E_0\big)\cap C\big((0,T_0],E_1\big)\cap  C \big([0,T_0],E_\alpha\big)\cap C_\mu\big((0,T_0],E_\xi\big)
\end{aligned}
\end{equation*}
for some  $\mu\in(\xi,1)$, see \eqref{much},   to
\begin{equation}\label{EE1}
u_1'=Au_1+f(t,u_1)\,,\quad t\in(0,T_0]\,,\qquad u_1(0)=u^0\,.
\end{equation}
Noticing that $u_1(T_0)\in E_1\hookrightarrow E_\xi$, we infer from  Proposition~\ref{MT0}  (see \eqref{linbound} and \eqref{w2} if $\gamma=0$) that there exists a strong solution
\begin{equation*} 
\begin{aligned}
 u_2&\in C^1\big((0,T_0],E_0\big)\cap C\big((0,T_0],E_1\big)\cap  C \big([0,T_0],E_\xi\big)
\end{aligned}
\end{equation*}
 to
\begin{equation}\label{EE2}
u_2'=Au_2+f(t+T_0,u_2)\,,\quad t\in(0,T_0]\,,\qquad u_2(0)=u_1(T_0)\,.
\end{equation}
 Setting
$$
u(t):=\left\{\begin{array}{ll}
u_1(t)\,, &0\le t\le T_0\,,\\
u_2(t-T_0)\,, &T_0\le t\le 2T_0\,,
\end{array}\right.
$$
we obtain $u\in  C_\mu\big((0,2T_0],E_\xi\big)$. In particular, $f(\cdot,u)\in  C \big((0,2T_0],E_\gamma\big)$ satisfies \eqref{e8} (with $T=2T_0$ therein). Since $u_1$ and $u_2$ are given by the corresponding mild formulation  of~\eqref{EE1} respectively~\eqref{EE2} (see~\eqref{ttp}), it readily follows that $u$ satisfies
\begin{equation*}
u(t)=e^{tA}u^0+ \int_0^t e^{(t-\tau)A} f(\tau,u(\tau))\,\rd \tau\,,\qquad t\in[0,2T_0]\,.
\end{equation*}
We may now argue as in the proof of Theorem~\ref{MT1} to deduce that
\begin{equation*} 
\begin{aligned}
 u&\in C^1\big((0,2T_0],E_0\big)\cap C\big((0,2T_0],E_1\big)\cap  C \big([0,2T_0],E_\alpha\big)
\end{aligned}
\end{equation*}
is a strong solution to~\eqref{EE} on $[0,2T_0]$. 
Consequently,  using Proposition~\ref{MT0}, we may inductively extend the solution to~$[0,nT_0]$ for $n\ge 2$ and obtain in this way a global strong solution
\begin{equation*} 
\begin{aligned}
 u&\in C^1\big((0,\infty),E_0\big)\cap C\big((0,\infty),E_1\big)\cap  C \big([0,\infty),E_\alpha\big)
\end{aligned}
\end{equation*}
to~\eqref{EE} on $[0,\infty)$. 

The assertion for $u^0\in E_\alpha$ with $\alpha\in [\xi,1)$ follows by applying  Proposition~\ref{MT0} successively on~${[n,n+1]}$ for $n\in\N$   and gluing the solutions as above.  
This proves Corollary~\ref{MT2}.
\qed

%%%%%%%%%%%%%%%%%%%%%%%%%%%%%%%%%%%%%%%%%%%%%%%%
%%%%%%%%%%%%%%%%%%%%%%%%%%%%%%%%%%%%%%%%%%%%%%%%
%%%%%%%%%%%%%%%%%%%%%%%%%%%%%%%%%%%%%%%%%%%%%%%%
\section{Applications}\label{Sec:3}
%%%%%%%%%%%%%%%%%%%%%%%%%%%%%%%%%%%%%%%%%%%%%%%%
%%%%%%%%%%%%%%%%%%%%%%%%%%%%%%%%%%%%%%%%%%%%%%%%
%%%%%%%%%%%%%%%%%%%%%%%%%%%%%%%%%%%%%%%%%%%%%%%%
We apply the abstract theory to  concrete models.
 As a starting point we provide in Section~\ref{Sec:31} a  functional analytic framework for the  subsequent applications.
 In Section~\ref{Sec:32} we then analyze  an evolution equation for the spreading of bushfires 
 and  in Section~\ref{Sec:33}  we apply the abstract results to a reaction-diffusion system.
 The examples presented in Section~\ref{Sec:32}--Section~\ref{Sec:33}
  effectively illustrate the significance of both Proposition~\ref{MT0} and Theorem~\ref{MT1} in practical applications.

%%%%%%%%%%%%%%%%%%%%%%%%%%%%%
\subsection{Functional Analytic Setting}\label{Sec:31}
%%%%%%%%%%%%%%%%%%%%%%%%%%%%%

Let   $\Omega\subset\R^n$, $n\in\N^*$,  be an open and bounded set with  smooth boundary and outward unit normal $\nu$.
In order to include Dirichlet and Neumann boundary conditions, we choose~${\delta\in\{0,1\}}$ and define
$$
\mathcal{B}u:=u  \ \text{ on } \ \partial\Omega \ \text{ if } \ \delta=0\,,\qquad \mathcal{B}u:=\partial_\nu u\ \text{ on } \ \partial\Omega \ \text{ if } \ \delta=1\,,
$$
hence $\mathcal{B}$ is the trace operator if $\delta=0$  corresponding to Dirichlet boundary conditions, while $\delta=1$ refers to Neumann boundary conditions.
 We introduce
$$
F_0:=L_2(\Omega)\,,\qquad F_1:= H_{\mathcal{B}}^{2}(\Omega)=\{v\in H^{2}(\Omega)\,:\,  \mathcal{B} v=0 \text{ on } \partial\Omega\}\,, 
$$
with $H^{2}(\Omega)=W_{2,\mathcal{B}}^{2}(\Omega)$ denoting the Sobolev space of second-order over $L_2(\Omega)$,
and recall that
$$
B_0:=\Delta_\mathcal{B}\in \mathcal{H}\big(H_\mathcal{B}^{2}(\Omega),L_2(\Omega)\big)\,,
$$
 that is, $B_0=\Delta_\mathcal{B}$ with domain $ H_\mathcal{B}^{2}(\Omega)$ generates an analytic semigroup on $F_0=L_2(\Omega)$.
Let
$$
\big\{(F_\theta,B_\theta)\,:\, -1\le \theta<\infty\big\}\ 
$$
be the interpolation-extrapolation scale generated by $(F_0,B_0)$ and the 
complex interpolation functor~$[\cdot,\cdot]_\theta$ (see \cite[\S 6]{Amann_Teubner} and \cite[\S V.1]{LQPP}), that is,
\begin{equation}\label{f2x}
B_\theta\in \mathcal{H}(F_{1+\theta},F_\theta)\,,\quad -1\le \theta<\infty\,,
\end{equation}
  and, for $2\theta\neq -\delta-1/2$, we have  (see \cite[Theorem~7.1;   Equations (7.4)-(7.5)]{Amann_Teubner})
\begin{equation}\label{f2}
 F_\theta\doteq H_{\mathcal{B}}^{2\theta}(\Omega):=\left\{\begin{array}{ll} \{v\in H^{2\theta}(\Omega) \,:\, \mathcal{B} v=0 \text{ on } 
 \partial\Omega\}\,, &\delta+\frac{1}{2}<2\theta\leq 2 \,,\\[3pt]
	 H^{2\theta}(\Omega)\,, &  -\frac{3}{2}+\delta< 2\theta<\delta +\frac{1}{2}\,.\end{array} \right.
\end{equation}
Since $\Delta_\mathcal{B}$ has bounded imaginary powers, \cite[Remarks~6.1~(d)]{Amann_Teubner}  provides  the following reiteration property 
\begin{equation}\label{f3}
 [F_\alpha,F_\beta]_\theta\doteq F_{(1-\theta)\alpha+\theta\beta}.
\end{equation}

%%%%%%%%%%%%%%%%%%%%%%%%%%%%%%%%%%%%%%%%%%%%%%%%
%%%%%%%%%%%%%%%%%%%%%%%%%%%%%%%%%%%%%%%%%%%%%%%%
%%%%%%%%%%%%%%%%%%%%%%%%%%%%%%%%%%%%%%%%%%%%%%%%
\subsection{A model for front propagation in bushfires}\label{Sec:32}
%%%%%%%%%%%%%%%%%%%%%%%%%%%%%%%%%%%%%%%%%%%%%%%%
%%%%%%%%%%%%%%%%%%%%%%%%%%%%%%%%%%%%%%%%%%%%%%%%
%%%%%%%%%%%%%%%%%%%%%%%%%%%%%%%%%%%%%%%%%%%%%%%%

In \cite{DVWW24ax, DVWW24bx} the authors recently proposed and studied, both from an analytical and numerical point of view, the nonlocal equation 
\begin{subequations}\label{Bush}
\begin{equation}\label{Bush1}
\p_t u=\Delta u+\int_{\Omega}\big (u(t,y)-\Theta(t,y)\big)_+K(x,y)\,{\rm d}y+\Big(\Big(\omega+\frac{\beta (u)\nabla u}{|\nabla u|^\nu}\Big)\cdot\nabla u\Big)_-,\qquad x\in\Omega\,,\quad t>0\,,
\end{equation}
describing front propagation in bushfires, where the unknown $u=u(t,x)$ is  the  environmental temperature. 
 Here,~$r_\pm:=\max\{0,\pm r\}$ for~${r\in\R}$ and  $\nu\in[1,2]$.
We assume that $\Omega$ is  on an open, bounded, smooth subset  of $\R^n$  with $n\geq2$ (for an extension of the subsequent results to Lipschitz domains $\Omega$ one may use \cite{Wood}). 
We refer to \cite{DVWW24ax, DVWW24bx} for the physical interpretation of the integral kernel~$K$  and the functions $\Theta,\, \omega$, and $\beta$.
Equation \eqref{Bush1} is supplemented by the Dirichlet boundary condition
\begin{equation}\label{Bush2}
u=0\ \text{ on $\p\Omega$}\,,\qquad t>0\,, 
\end{equation}
 and is subject to the initial condition
\begin{equation}\label{Bush3}
u(0,x)=u^0(x)\,,\qquad x\in\Omega\,. 
\end{equation}
\end{subequations}
In \cite{DVWW24bx} the authors established the existence of local and global  weak solutions to \eqref{Bush} for initial data~$u^0\in H^1(\Omega)$ with $u=0$ on $\partial\Omega$. 
The following theorem provides, under slightly more restrictive regularity assumptions on $\Theta $ and~$\omega$, but less restrictive integrability assumptions on $\Theta$ 
(and no size conditions on $\beta$, $\omega$, and $\Theta$  when $\nu=1$)  compared to \cite{DVWW24bx}, 
global strong solutions to~\eqref{Bush}  for the larger class of initial data~${u^0\in L_2(\Omega)}$. 

In the context of \eqref{Bush} we define 
\[
 H_D^{2\theta}(\Omega):=H^{2\theta}_{\mathcal{B}}(\Omega)\,, \qquad 2\theta\in\Big(-\frac{3}{2}, 2\Big]\setminus \Big\{\pm \frac{1}{2}\Big\}\,,
\] 
 with Dirichlet boundary conditions $\mathcal{B}u=0$ on $\partial\Omega$, see~\eqref{f2}  with $\delta=0$.
With this notation our result reads as follows.

 \begin{thm}\label{T:A1} Let~$\nu\in[1,2]$  and assume that 
$$
K\in L_2(\Omega\times\Omega)\,,\quad \beta\in W^1_\infty(\R)\,,\quad  \Theta\in C( [0,\infty), L_2(\Omega))\,,\quad \omega \in C([0,\infty), L_\infty(\Omega, \R^n))\,,
$$ 
and~$2\ve\in(0,1/7)$.
 Then, given $u^0\in L_2(\Omega)$, there exists a global  strong solution to \eqref{Bush} such that
 \begin{equation}\label{regsol}
  u\in C^1\big((0,\infty),H^{-2\ve}(\Omega)\big)\cap C\big((0,\infty),H^{2-2\ve}_D(\Omega)\big)\cap  C \big([0,\infty),L_2(\Omega)\big)\,.
  \end{equation}
 
Moreover,
 \begin{itemize}
 \item[(i)] if $\nu=2$, then the solution is unique; 
  \item[(ii)]    if, in addition,   $\Theta\in C^\vartheta([0,\infty), L_2(\Omega))$ and   $ \omega \in C^\vartheta( [0,\infty), L_\infty(\Omega, \R^n)) $ for some $\vartheta\in (0,1)$, then the solution  satisfies
   \[
  u\in C^1\big((0,\infty),L_2(\Omega)\big)\cap C\big((0,\infty),H^{2}_D(\Omega)\big)\,.
  \]
 \end{itemize}
 \end{thm}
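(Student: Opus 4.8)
The plan is to recast \eqref{Bush} as the abstract problem \eqref{EE} and to verify the hypotheses of Theorem~\ref{MT1} and Corollary~\ref{MT2} within the scale of Section~\ref{Sec:31} (with $\delta=0$, Dirichlet). Since \eqref{regsol} places $u'$ in $H^{-2\ve}(\Omega)$, I would not work over the base $L_2(\Omega)$ but over the \emph{extrapolated} base $E_0:=H^{-2\ve}(\Omega)=F_{-\ve}$, $E_1:=H^{2-2\ve}_D(\Omega)=F_{1-\ve}$, with $A:=B_{-\ve}$ the corresponding realization of the Dirichlet Laplacian, which by \eqref{f2x} generates an analytic semigroup on $E_0$; the reiteration property \eqref{f3} then gives $E_\theta\doteq H^{2\theta-2\ve}_{(D)}(\Omega)$, so that $L_2(\Omega)=F_0=E_\ve$ is an interior space of the scale. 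The entire point of this shift is that $u^0\in L_2(\Omega)$ lies in $E_\alpha$ with $\alpha=\ve>0$ and, crucially, that the nonlinearity will land in $E_\gamma$ with $\gamma=\ve>0$ rather than in $E_0$, so that the time-H\"older condition \eqref{a1B'} is \emph{not} required --- which is essential, since $\Theta,\omega$ are merely continuous in time and the time-difference of the drift involves $\|\nabla u\|$, not controlled by $\|u\|_{E_0}$. Using $r_+r_-=0$ and $|\nabla u|^{-\nu}(\nabla u\cdot\nabla u)=|\nabla u|^{2-\nu}$, the nonlinearity reads
\[
f(t,u)=\int_\Omega\big(u(\cdot,y)-\Theta(t,y)\big)_+K(\cdot,y)\,\rd y+\big(\omega(t)\cdot\nabla u+\beta(u)|\nabla u|^{2-\nu}\big)_-,
\]
and, as $r\mapsto r_\pm$ are $1$-Lipschitz, its growth and difference estimates reduce to those of the three building blocks.

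Next I would fix $\gamma:=\ve$ (so $E_\gamma=L_2(\Omega)$) and choose $\xi$ slightly above $1/2+\ve$, so that $E_\xi\doteq H^{2\xi-2\ve}_D(\Omega)\hookrightarrow H^1(\Omega)$ with a margin, i.e.\ $E_\xi\hookrightarrow W^1_r(\Omega)$ for some $r>2$; this is precisely what makes $\nabla u$ pointwise defined and integrable. The linear bound \eqref{a1B1} is then routine: the kernel term is controlled by $\|K\|_{L_2(\Omega\times\Omega)}(1+\|u\|_{L_2}+\|\Theta\|_{L_2})$, the drift by $\|\omega\|_\infty\|u\|_{H^1}$, and the flux by $\|\beta\|_\infty\|\nabla u\|_{L_{2(2-\nu)}}^{2-\nu}$, the last being sublinear because $2-\nu\le1$ (this is the situation of Remark~(iii) for $\nu=1$, where the linear bound coexists with $q>1$). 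For \eqref{a1B2}, the kernel term and the drift $\omega\cdot\nabla(u-v)$ are Lipschitz ($\vartheta_j=q_j=1$), so the only genuinely nonlinear contribution is $\beta(u)|\nabla u|^{2-\nu}-\beta(v)|\nabla v|^{2-\nu}$.

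I would split this flux difference as $\beta(u)\big(|\nabla u|^{2-\nu}-|\nabla v|^{2-\nu}\big)+\big(\beta(u)-\beta(v)\big)|\nabla v|^{2-\nu}$. The first piece is handled by the elementary inequality $\big||a|^s-|b|^s\big|\le|a-b|^s$ with $s=2-\nu\in(0,1]$, contributing $\vartheta_j=q_j=2-\nu$. The second piece is the \textbf{main obstacle}: estimating $\beta$ by its Lipschitz bound alone forces the prefactor $\|\nabla v\|^{2-\nu}$ together with $\|u-v\|$, hence the exponent $q_j=1+(2-\nu)$, which for $\nu=1$ equals $2$ and is incompatible with the mandatory constraint $\xi<1/q$ under $\xi>1/2$. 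The remedy is to interpolate $|\beta(u)-\beta(v)|\le 2\|\beta\|_\infty$ with $|\beta(u)-\beta(v)|\le\|\beta'\|_\infty|u-v|$ to obtain $|\beta(u)-\beta(v)|\le C|u-v|^{\vartheta}$ for a small $\vartheta\in(0,1)$, lowering the exponent to $q_j=(2-\nu)+\vartheta$; a H\"older inequality together with the Sobolev embeddings of $u-v\in E_\xi$ and of $\nabla v\in H^{2\xi-2\ve-1}(\Omega)$ then closes the bound in $L_2(\Omega)$. The smallness $2\ve<1/7$ serves exactly to reserve enough room, uniformly in $n\ge2$ and $\nu\in[1,2]$, to choose $\xi$ and $\vartheta$ with $1/2+\ve<\xi<\min\{1,1/q\}$, $q=\max\{1,(2-\nu)+\vartheta\}$, while keeping the H\"older/Sobolev exponents admissible. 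With \eqref{ASS} verified and $\gamma=\ve>0$, Theorem~\ref{MT1} yields a solution on each $[0,T]$ and Corollary~\ref{MT2}(a) (the linear bound \eqref{linbound} holding on every $[0,T]$) assembles a global one; the $C([0,\infty),E_\alpha)$ statement with $\alpha=\ve$ is admissible since $\ve<1+\gamma-\xi$, and the identifications $E_0=H^{-2\ve}(\Omega)$, $E_1=H^{2-2\ve}_D(\Omega)$, $E_\alpha=L_2(\Omega)$ give precisely \eqref{regsol}.

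Finally, for the two addenda. When $\nu=2$ the flux reduces to $\beta(u)$, so $f$ is locally Lipschitz from $E_\xi$ into $L_2(\Omega)$ (drift linear, $\beta$ Lipschitz, kernel Lipschitz), i.e.\ $1\le\vartheta_j\le q_j$, and uniqueness follows from the classical Lipschitz theory as in Remark~(i). For (ii) I would not re-solve but bootstrap the constructed solution: on $(0,\infty)$ one has $u\in C((0,\infty),H^{2-2\ve}_D)\hookrightarrow C((0,\infty),H^1)$ and, by parabolic smoothing, $u\in C^\sigma_{\mathrm{loc}}((0,\infty),E_\xi)$ for some $\sigma>0$; combining this with the spatial H\"older estimate \eqref{a1B2} and the newly assumed $\Theta,\omega\in C^\vartheta$ in time yields $t\mapsto f(t,u(t))\in C^\rho_{\mathrm{loc}}((0,\infty),L_2(\Omega))$, whence \cite[II.Theorem~1.2.1]{LQPP} upgrades the solution to $C^1((0,\infty),L_2(\Omega))\cap C((0,\infty),H^2_D(\Omega))$, exactly as in the $\gamma=0$ bootstrap concluding the proof of Theorem~\ref{MT1}.
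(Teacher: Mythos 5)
Your existence argument and your bootstrap for (ii) are, up to harmless reparametrization, exactly the paper's proof: the paper also works on the extrapolated scale $E_0=H^{-2\ve}_D(\Omega)$, $E_1=H^{2-2\ve}_D(\Omega)$ with $\gamma=\ve$ (so $E_\gamma=L_2(\Omega)$ and \eqref{a1B'} is never needed), fixes $2\xi=1+4\ve$ where you take $\xi$ ``slightly above $1/2+\ve$'', and your key device for the term $\big(\beta(u)-\beta(v)\big)|\nabla v|^{2-\nu}$ --- interpolating the $L_\infty$ and Lipschitz bounds of $\beta$ and pairing via H\"older with the extra integrability of $\nabla v\in H^{2\ve}(\Omega)$ --- is precisely Lemma~\ref{L:A3}, which yields the exponents $\vartheta_2=4\ve/n$, $q=1+4\ve/n$ for $\nu=1$ (for $\nu\in(1,2]$ the paper takes $\vartheta_2=\nu-1$, $q_2=1$, which your free small $\vartheta$ also achieves). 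The conclusion via Corollary~\ref{MT2}~(a) with $\alpha=\ve<1+\gamma-\xi$, and the proof of (ii) via $u(\delta)\in E_1$, the time-H\"older assumptions on $\Theta,\omega$, and \cite[II.Theorem~1.2.1]{LQPP}, likewise coincide with the paper.

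The genuine gap is in (i). You claim that for $\nu=2$ ``uniqueness follows from the classical Lipschitz theory'', but that theory (Banach fixed point, or the fixed point formulation \eqref{ttp} combined with the singular Gronwall inequality \cite[II.Corollary 3.3.2]{LQPP}) only yields uniqueness \emph{within} a weighted class such as $C_\mu\big((0,T],H^{1+2\ve}_D(\Omega)\big)$, i.e.\ among solutions satisfying $t^\mu\|u(t)\|_{H^{1+2\ve}}\to0$ as $t\to0$. A competitor solution in the class \eqref{regsol} is only known to be continuous into $L_2(\Omega)$ at $t=0$ and into $H^{2-2\ve}_D(\Omega)$ on $(0,\infty)$, with no rate as $t\to0^+$; membership in $C_\mu$ does not follow (interpolation against $\|u(t)\|_{H^{2-2\ve}}$ gives nothing, since that norm may blow up arbitrarily fast), so your argument proves a strictly weaker statement than Theorem~\ref{T:A1}~(i). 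The paper is explicit about this distinction in the remark following the theorem and therefore proves (i) by a different, energy-based route: for two solutions $u_1,u_2$ with \eqref{regsol} and $u_1(0)=u_2(0)$, apply the Lions--Magenes lemma \cite[Theorem~II.5.12]{BF13} to $u_1-u_2$ on $[\tilde t,t]$, use the global Lipschitz estimate $\|f_2(\tau,u_1(\tau))-f_2(\tau,u_2(\tau))\|_2\le C(T)\|(u_1-u_2)(\tau)\|_{H^1}$ from Lemma~\ref{L:A1} and \eqref{A}, absorb the $H^1$-norm into the dissipation $-2\int\|\nabla(u_1-u_2)\|_2^2$ by Young's inequality, let $\tilde t\to0$ using $u_1-u_2\in C([0,T],L_2(\Omega))$, and conclude by Gronwall. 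Replacing your last step of (i) by this argument repairs the proof; everything else stands.
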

 
 Concerning the uniqueness statement for $\nu=2$ we add the following observation.
 
 \begin{rem}
 The solution found in Theorem~\ref{T:A1} satisfies additionally $u\in C_\mu((0,T], H^{1+2\ve}_D(\Omega))$ for  some fixed $\mu\in (1/2+2\ve,1)$  and each $T>0$. 
For $\nu=2$, the uniqueness of the solution,  for functions which satisfy \eqref{regsol} and 
belong to $  C_\mu((0,T], H^{1+2\ve}_D(\Omega))$ for each $T>0$, can be also shown by 
 using directly  the fixed point formulation \eqref{ttp}  together with the singular Gronwall inequality \cite[II.Corollary 3.3.2]{LQPP} 
 (since the nonlinearity $f=f(t,u)$ is locally Lipschitz continuous in $u$,  see~\eqref{A} and  Lemma~\ref{L:A1} below). 
 However, the uniqueness result in Theorem~\ref{T:A1}~(i) is more general. 
  \end{rem}
  
Before proving Theorem~\ref{T:A1} we  note some auxiliary results. 
The following lemma is related to \cite[Lemma~2.2, Lemma~2.4]{DVWW24ax}.

\begin{lem}\label{L:A1} Let $K\in L_2(\Omega\times\Omega).$ 
Then the mapping $g_0:L_2(\Omega)^2\to L_2(\Omega)$, given by
\[
g_0(u,\Theta)(x):=\int_{\Omega}\big (u(y)-\Theta(y)\big)_+K(x,y)\,{\rm d}y\,,\qquad x\in\Omega\,,\quad u, \Theta\in L_2(\Omega)\,,
\]
is well-defined and satisfies
\begin{align*}
&\|g_0(u,\Theta)\|_2\leq \|K\|_2\big(\|\Theta\|_2+\|u\|_2\big),\qquad u, \Theta\in L_2(\Omega)\,,
\end{align*}
and
\begin{align*}
&\|g_0(u_1,\Theta_1)-g_0(u_2,\Theta_2)\|_2\leq \|K\|_2\big(\|u_1-u_2\|_2+\|\Theta_1-\Theta_2\|_2\big)\,,\qquad u_i, \Theta_i\in L_2(\Omega),\quad i=1\,,2\,.
\end{align*}
\end{lem}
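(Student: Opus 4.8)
The plan is to view $g_0$ as the composition of the pointwise superposition map $(u,\Theta)\mapsto (u-\Theta)_+$ with the Hilbert--Schmidt integral operator having kernel $K$, and to reduce everything to two elementary observations. First I would record: (a) the positive-part map $r\mapsto r_+$ on $\R$ is a contraction, $|r_+-s_+|\le |r-s|$ for all $r,s\in\R$, which with $s=0$ also gives $|r_+|\le |r|$; and (b) since $K\in L_2(\Omega\times\Omega)$, Fubini--Tonelli provides $K(x,\cdot)\in L_2(\Omega)$ for a.e.\ $x\in\Omega$ together with $\int_\Omega \|K(x,\cdot)\|_2^2\,\rd x = \|K\|_2^2$.

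For well-definedness and the growth bound I would proceed as follows. Fact (a) gives $\|(u-\Theta)_+\|_2\le\|u-\Theta\|_2\le\|u\|_2+\|\Theta\|_2$, so $(u-\Theta)_+\in L_2(\Omega)$; for a.e.\ fixed $x$, Cauchy--Schwarz yields $|g_0(u,\Theta)(x)|\le \|K(x,\cdot)\|_2\,\|(u-\Theta)_+\|_2$, while the joint measurability of the integrand (inherited from $K\in L_2(\Omega\times\Omega)$) makes $g_0(u,\Theta)$ measurable. Squaring, integrating in $x$, and invoking (b) then gives
\[
\|g_0(u,\Theta)\|_2^2 \le \|(u-\Theta)_+\|_2^2\int_\Omega\|K(x,\cdot)\|_2^2\,\rd x = \|K\|_2^2\,\|(u-\Theta)_+\|_2^2,
\]
which is the first estimate.

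For the Lipschitz estimate I would apply fact (a) pointwise in $y$ to obtain, a.e.,
\[
\big|(u_1-\Theta_1)_+ - (u_2-\Theta_2)_+\big| \le \big|(u_1-u_2)-(\Theta_1-\Theta_2)\big|,
\]
hence $\|(u_1-\Theta_1)_+-(u_2-\Theta_2)_+\|_2\le \|u_1-u_2\|_2+\|\Theta_1-\Theta_2\|_2$. Exploiting the linearity of the map $w\mapsto \int_\Omega w(y)K(\cdot,y)\,\rd y$, I would write
\[
g_0(u_1,\Theta_1)(x)-g_0(u_2,\Theta_2)(x)=\int_\Omega\big[(u_1-\Theta_1)_+-(u_2-\Theta_2)_+\big](y)\,K(x,y)\,\rd y
\]
and repeat the Cauchy--Schwarz/Fubini computation from the previous step to conclude the second estimate.

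The main obstacle is merely bookkeeping rather than analysis: one must carefully justify, via Fubini, that $g_0(u,\Theta)$ is a genuine measurable element of $L_2(\Omega)$ and that the a.e.\ pointwise Cauchy--Schwarz bound integrates correctly against (b). All the substantive content collapses to the contraction property of the positive-part map combined with the Hilbert--Schmidt bound for $K$; no delicate estimates are required.
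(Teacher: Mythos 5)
Your proposal is correct and follows essentially the same route as the paper: the paper's proof is a one-line remark invoking H\"older's inequality (Cauchy--Schwarz, via the Hilbert--Schmidt kernel $K$) together with the contraction property $|a_+-b_+|\le|a-b|$, which is exactly your facts (a) and (b). You simply spell out the Fubini/measurability bookkeeping that the paper leaves implicit.
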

\begin{proof}
The claims follow by using H\"older's inequality together with the inequality $|a_+-b_+|\leq |a-b|$ for $a,\, b\in\R.$
\end{proof}

 We turn to the second, nonlinear term  on the right of \eqref{Bush1}.
To this end we define  for $\nu\in[1,2]$ the mapping 
$$
g_\nu:L_2(\Omega)\times L_2(\Omega, \R^n)\times L_\infty(\Omega, \R^n) \to L_2(\Omega)
$$
according to
\[
g_\nu(u,p,\omega):=\big(\omega\cdot p +\beta (u)|p|^{2-\nu}\big)_-.
\]
 The following result was observed in \cite[Lemma~2.1, Lemma~2.3, Lemma~2.5]{DVWW24ax} (the proof  therein is valid also for $\nu=1)$.

\begin{lem}\label{L:A2} If $\beta\in W^1_\infty(\R)$ and $\nu\in[1,2]$,  then for  $(u,p, \omega)\in  L_2(\Omega)\times L_2(\Omega,\R^n)\times L_\infty(\Omega,\R^n)$ we have
\begin{align*}
&\|g_\nu(u,p,\omega)\|_2\leq \|\omega\|_\infty\|p\|_2+|\Omega|^{(\nu-1)/2}\|\beta\|_\infty\|p\|_2^{2-\nu}
\end{align*}
 and for  $(u_i,p_i, \omega_i)\in  L_2(\Omega)\times L_2(\Omega,\R^n)\times L_\infty(\Omega,\R^n)$, $i=1\,,2$,
\begin{align*}
\|g_\nu(u_1,p_1&,\omega_1)-g_\nu(u_2,p_2,\omega_2)\|_2\\
&\leq \|\omega_1\|_\infty\|p_1-p_2\|_2+\|p_2\|_2\|\omega_1-\omega_2\|_\infty+2\|\beta\|_{W^1_\infty}\|p_1\|_{2}^{ 2-\nu}\|u_1-u_2\|_2^{\nu-1}\\
&\quad+|\Omega|^{(\nu-1)/2}\|\beta\|_\infty\|p_1-p_2\|_2^{2-\nu}\,.
\end{align*}
 In fact, for $\nu=2$,
\begin{equation}
\begin{split}\label{A}
\|g_2(u_1&,p_1,\omega_1)-g_2(u_2,p_2,\omega_2)\|_2\\
&\leq \|\omega_1\|_\infty\|p_1-p_2\|_2+\|p_2\|_2\|\omega_1-\omega_2\|_\infty+\|\beta\|_{W^1_\infty}\|u_1-u_2\|_2\,.
\end{split}
\end{equation}
\end{lem}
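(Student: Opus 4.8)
The plan is to derive everything from pointwise (a.e.) estimates on $\Omega$ followed by integration, using two elementary facts: that the negative-part map $r\mapsto r_-$ satisfies $r_-\le|r|$ and $|r_--s_-|\le|r-s|$ for $r,s\in\R$, and that for $q=2-\nu\in[0,1]$ one has $|a^{q}-b^{q}|\le|a-b|^{q}$ for $a,b\ge0$. First I would prove the growth bound: pointwise, $r_-\le|r|$ gives $|g_\nu(u,p,\omega)|\le\|\omega\|_\infty|p|+\|\beta\|_\infty|p|^{2-\nu}$, and taking $L_2$-norms reduces matters to estimating $\||p|^{2-\nu}\|_2$. Since $2-\nu\in[0,1]$, H\"older's inequality with the conjugate exponents $\tfrac{1}{2-\nu}$ and $\tfrac{1}{\nu-1}$ yields $\||p|^{2-\nu}\|_2\le|\Omega|^{(\nu-1)/2}\|p\|_2^{2-\nu}$, which gives the first inequality; the same estimate applied to $p_1-p_2$ will later produce the last term of the difference bound.

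For the difference estimate I would use the $1$-Lipschitz property of $r\mapsto r_-$ to bound $|g_\nu(u_1,p_1,\omega_1)-g_\nu(u_2,p_2,\omega_2)|$ by $|\omega_1\cdot p_1-\omega_2\cdot p_2|+|\beta(u_1)|p_1|^{2-\nu}-\beta(u_2)|p_2|^{2-\nu}|$ and then split. The bilinear term is handled by writing $\omega_1\cdot p_1-\omega_2\cdot p_2=\omega_1\cdot(p_1-p_2)+(\omega_1-\omega_2)\cdot p_2$, which gives after integration the first two terms $\|\omega_1\|_\infty\|p_1-p_2\|_2+\|\omega_1-\omega_2\|_\infty\|p_2\|_2$. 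In the second term I would add and subtract $\beta(u_2)|p_1|^{2-\nu}$ to obtain the pieces $\beta(u_2)(|p_1|^{2-\nu}-|p_2|^{2-\nu})$ and $(\beta(u_1)-\beta(u_2))|p_1|^{2-\nu}$; the first is controlled using $|a^{2-\nu}-b^{2-\nu}|\le|a-b|^{2-\nu}$ together with the bound for $\||p_1-p_2|^{2-\nu}\|_2$ from the previous step, yielding the term $|\Omega|^{(\nu-1)/2}\|\beta\|_\infty\|p_1-p_2\|_2^{2-\nu}$.

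The main obstacle is the remaining piece $(\beta(u_1)-\beta(u_2))|p_1|^{2-\nu}$, which must be made to produce the intermediate term $2\|\beta\|_{W^1_\infty}\|p_1\|_2^{2-\nu}\|u_1-u_2\|_2^{\nu-1}$. The key idea is to interpolate the two available controls on $\beta$ by factoring $|\beta(u_1)-\beta(u_2)|=|\beta(u_1)-\beta(u_2)|^{2-\nu}\,|\beta(u_1)-\beta(u_2)|^{\nu-1}$ and estimating the first factor by the uniform bound $(2\|\beta\|_\infty)^{2-\nu}$ and the second by the Lipschitz bound $(\|\beta'\|_\infty|u_1-u_2|)^{\nu-1}$; since $2-\nu\le1$, the resulting constant $2^{2-\nu}\|\beta\|_{W^1_\infty}$ is at most $2\|\beta\|_{W^1_\infty}$. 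It then remains to bound $\||u_1-u_2|^{\nu-1}|p_1|^{2-\nu}\|_2$, which follows from H\"older's inequality with the conjugate exponents $\tfrac{1}{\nu-1}$ and $\tfrac{1}{2-\nu}$ and equals $\|u_1-u_2\|_2^{\nu-1}\|p_1\|_2^{2-\nu}$; the endpoint $\nu=1$ is read as the $L_\infty$--$L_1$ pairing, in which case this piece collapses to $2\|\beta\|_\infty\|p_1\|_2$.

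Finally, the sharper bound for $\nu=2$ is immediate and requires none of the delicate interpolation above: since $|p|^{2-\nu}\equiv1$, the problematic piece disappears entirely, the difference reduces to $|\omega_1\cdot p_1-\omega_2\cdot p_2|+|\beta(u_1)-\beta(u_2)|$, and only the Lipschitz continuity of $\beta$ is used, giving directly the third term $\|\beta\|_{W^1_\infty}\|u_1-u_2\|_2$.
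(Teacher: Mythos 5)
Your proof is correct and follows essentially the same route as the paper, whose own argument is just the one-line remark that the claims follow from H\"older's inequality together with $|a_--b_-|\le |a-b|$ (with the details deferred to the cited reference \cite{DVWW24ax}). In particular, your factorization $|\beta(u_1)-\beta(u_2)|\le (2\|\beta\|_\infty)^{2-\nu}\,(\|\beta'\|_\infty|u_1-u_2|)^{\nu-1}$, followed by H\"older with exponents $\tfrac{1}{\nu-1}$ and $\tfrac{1}{2-\nu}$, correctly supplies the one nontrivial step and yields exactly the stated constant $2\|\beta\|_{W^1_\infty}$ and the exponents in the middle term.
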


\begin{proof}
The claims follow by using H\"older's inequality together with the inequality $|a_--b_-|\leq |a-b|$ for  $a,\, b\in\R $.
\end{proof}

We note   for $\nu=1$ that the estimate in Lemma~\ref{L:A2}  does not provide local H\"older continuity of~$g_1$. 
This is achieved  now in Lemma~\ref{L:A3} by restricting  the range for $p$.

\begin{lem}\label{L:A3}  If $\beta\in W^1_\infty(\R)$ and $2\ve\in(0,1/2)$, then there is $c>0$ with
\begin{align*}
\|g_1(u_1,p_1,\omega_1)-g_1(u_2,p_2,\omega_2)\|_2&\leq \big(\|\omega_1\|_\infty+\|\beta\|_\infty\big)\|p_1-p_2\|_2+\|p_2\|_2\,\|\omega_1-\omega_2\|_\infty\\
&\quad+ c\|\beta\|_{W^1_\infty}\,\|p_1\|_{H^{2\ve}}\, \|u_1-u_2\|_2^{4\ve/n}
\end{align*}
for all $ (u_i,p_i, \omega_i)\in  L_2(\Omega)\times H^{2\ve}(\Omega, \R^n)\times L_\infty(\Omega, \R^n)$, $i=1,\,2.$
\end{lem}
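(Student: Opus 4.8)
The plan is to reduce the whole estimate to a pointwise inequality and then to track carefully which factor carries the dependence on $u$. Exactly as in the proofs of Lemma~\ref{L:A1} and Lemma~\ref{L:A2}, the elementary bound $|a_--b_-|\le|a-b|$ gives, a.e.\ on $\Omega$,
\[
|g_1(u_1,p_1,\omega_1)-g_1(u_2,p_2,\omega_2)|\le\big|\omega_1\cdot p_1+\beta(u_1)|p_1|-\omega_2\cdot p_2-\beta(u_2)|p_2|\big|.
\]
First I would decompose the right-hand side by telescoping,
\[
\omega_1\cdot p_1-\omega_2\cdot p_2=\omega_1\cdot(p_1-p_2)+(\omega_1-\omega_2)\cdot p_2
\]
and
\[
\beta(u_1)|p_1|-\beta(u_2)|p_2|=\big(\beta(u_1)-\beta(u_2)\big)|p_1|+\beta(u_2)\big(|p_1|-|p_2|\big),
\]
so that taking $L_2(\Omega)$-norms and using $\big||p_1|-|p_2|\big|\le|p_1-p_2|$ together with H\"older's inequality immediately yields the first two stated terms $(\|\omega_1\|_\infty+\|\beta\|_\infty)\|p_1-p_2\|_2$ and $\|p_2\|_2\|\omega_1-\omega_2\|_\infty$. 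The entire difficulty is thus concentrated in estimating $\big\|(\beta(u_1)-\beta(u_2))|p_1|\big\|_2$, and this is precisely the term that forces $p$ to range in $H^{2\ve}$ rather than merely in $L_2$ as in Lemma~\ref{L:A2}.

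The key observation is that $\beta\in W^1_\infty(\R)$ is both bounded and Lipschitz, hence H\"older continuous of any exponent in $(0,1)$: interpolating the bounds $|\beta(u_1)-\beta(u_2)|\le2\|\beta\|_\infty$ and $|\beta(u_1)-\beta(u_2)|\le\|\beta\|_{W^1_\infty}|u_1-u_2|$ gives, for the specific exponent $\theta:=4\ve/n\in(0,1)$,
\[
|\beta(u_1)-\beta(u_2)|\le c\,\|\beta\|_{W^1_\infty}\,|u_1-u_2|^{4\ve/n}\qquad\text{a.e.\ on }\Omega.
\]
With this, I would estimate $\big\||u_1-u_2|^{4\ve/n}|p_1|\big\|_2$ by H\"older's inequality with the conjugate exponents $n/(4\ve)$ and $n/(n-4\ve)$, obtaining
\[
\big\||u_1-u_2|^{4\ve/n}\,|p_1|\big\|_2\le\|u_1-u_2\|_2^{4\ve/n}\,\|p_1\|_{L_{2n/(n-4\ve)}},
\]
and then invoke the Sobolev embedding $H^{2\ve}(\Omega)\hookrightarrow L_{2n/(n-4\ve)}(\Omega)$, valid because $2\ve<1/2\le n/2$ is subcritical with $\tfrac1q=\tfrac12-\tfrac{2\ve}{n}$, so that $\|p_1\|_{L_{2n/(n-4\ve)}}\le c\|p_1\|_{H^{2\ve}}$. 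Assembling the pieces produces the claimed third term $c\|\beta\|_{W^1_\infty}\|p_1\|_{H^{2\ve}}\|u_1-u_2\|_2^{4\ve/n}$.

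The only genuinely delicate point is the bookkeeping that makes the H\"older exponent on $\beta$ match the available Sobolev integrability of $p_1$: choosing $\theta=4\ve/n$ forces the conjugate exponent acting on $|p_1|$ to equal exactly the critical integrability $2n/(n-4\ve)$ of the embedding $H^{2\ve}(\Omega)\hookrightarrow L_{2n/(n-4\ve)}(\Omega)$, leaving no slack; any larger H\"older exponent for $\beta$ would demand integrability of $p_1$ beyond what $H^{2\ve}$ provides. Once this exponent matching is fixed, everything else is a routine application of H\"older's inequality and the triangle inequality, as in the proofs of the previous two lemmata.
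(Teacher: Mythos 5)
Your proof is correct and follows essentially the same route as the paper: the paper likewise isolates the term $\|p_1(\beta(u_1)-\beta(u_2))\|_2$, applies H\"older's inequality with the exponents $r_\ve=2n/(n-4\ve)$ and $q_\ve=n/(2\ve)$, invokes the embedding $H^{2\ve}(\Omega)\hookrightarrow L_{r_\ve}(\Omega)$, and interpolates between the boundedness and the Lipschitz continuity of $\beta$ to produce the factor $\|u_1-u_2\|_2^{4\ve/n}=\|u_1-u_2\|_2^{2/q_\ve}$. The only (immaterial) difference is ordering: you interpolate pointwise to get H\"older continuity of $\beta$ with exponent $4\ve/n$ before applying H\"older's inequality, whereas the paper applies H\"older's inequality first and then interpolates at the level of the $L_{q_\ve}$-norm of $\beta(u_1)-\beta(u_2)$.
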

\begin{proof}
The details are identical to that in Lemma~\ref{L:A2}, except those used when estimating the term~${\|p_1(\beta(u_1)-\beta(u_2))\|_2}$.
Letting $r_\ve,\,q_\ve\in(2,\infty)$ be given by 
\[
\frac{1}{r_\ve}:=\frac{1}{2}-\frac{2\ve}{n},\qquad \frac{1}{q_\ve}:=\frac{1}{2}-\frac{1}{ r_\ve}=\frac{2\ve}{n},
\] 
H\"older's inequality together with the embedding $H^{2\ve}(\Omega)\hookrightarrow L_{ r_\ve}(\Omega)$  implies
\begin{align*}
\|p_1(\beta(u_1)-\beta(u_2))\|_2&\leq \|p_1\|_{r_\ve}\|\beta(u_1)-\beta(u_2)\|_{q_\ve}\leq  c\|p_1\|_{H^{2\ve}}\Big(\int_\Omega|\beta(u_1)-\beta(u_2)|^{q_\ve}\,{\rd}x\Big)^{1/q_\ve}\\
&\leq   c\|p_1\|_{H^{2\ve}}(\|\beta\|_\infty)^{\frac{q_\ve-2}{q_\ve}}\Big(\int_\Omega|\beta(u_1)-\beta(u_2)|^{2}\,{\rd}x\Big)^{1/q_\ve}\\
&\leq  c \|\beta\|_{W^1_\infty}\|p_1\|_{H^{2\ve}}\|u_1-u_2\|_2^{2/q_\ve},
\end{align*}
which provides the desired estimate.
\end{proof}

We are now in a position to establish Theorem~\ref{T:A1}.

\subsection*{Proof of Theorem~\ref{T:A1}}

Let   $2\ve\in(0, 1/7)$ be fixed and  set 
\[
 E_0:= H^{-2\ve}_D(\Omega)\,,\qquad  E_1:= H^{2-2\ve}_D(\Omega)\,.
\]
Defining  $2\xi:=1+ 4\ve$ and $\gamma:=\ve\in(0,1)$, we infer from \eqref{f3} that
 \[
E_\xi = H^{ 1+2\ve}_D(\Omega)\,,\qquad E_\gamma  = L_2(\Omega)\,.
\]
We may thus recast \eqref{Bush}  as a semilinear parabolic Cauchy problem
\begin{equation}\label{EP:A1}
u'=\Delta u+f_\nu(t,u)\,,\quad t>0\,,\qquad u(0)=u^0\,,
\end{equation}
where $\Delta\in \mathcal{H}(E_1,E_0)$  (with Dirichlet boundary conditions, see \eqref{f2x}-\eqref{f2}) and $f_\nu: [0,\infty)\times E_\xi\to E_\gamma$ 
for $\nu\in[1,2]$ is defined by 
\begin{equation*}
f_\nu(t,u):=g_0(u,\Theta(t))+g_\nu(u,\nabla u,\omega(t)),\qquad (t,u)\in [0,\infty)\times E_\xi.
\end{equation*}
Lemma~\ref{L:A1}--Lemma~\ref{L:A3}  ensure that $f_\nu\in C( [0,\infty)\times E_\xi, E_\gamma)$ has the property that for each $T>0$ there exists $C(T)>0$ such that 
\begin{equation*}
\|f_\nu(t,u)\|_{E_\gamma}\leq C(T)(1+\|u\|_{E_\xi})\,,\qquad t\in[0,T]\,,\quad u\in E_\xi\,,
\end{equation*}
and, if $\nu\in (1,2]$, then for all $t\in[0,T]$ and $u_1,\, u_2\in E_\xi$,
\begin{equation*}
\|f_\nu(t,u_1)-f_\nu(t,u_2)\|_{E_\gamma}\leq C(T)\big(\|u_1-u_2\|_{E_\xi}^{\vartheta_1}+\|u_1\|_{E_\xi}^{q_2-\vartheta_2}\|u_1-u_2\|_{E_\xi}^{\vartheta_2}+\|u_1-u_2\|_{E_\xi}^{\vartheta_3}\big)
\end{equation*}
with 
$$
\vartheta_1:=1=:q_1\,,\quad 0\leq \vartheta_2:=\nu-1\leq q_2:=1 \,,\quad 0\leq \vartheta_3:=2-\nu=:q_3\leq 1\,,
$$ 
while,
for $\nu=1$, we have
\begin{equation*}
\|f_1(t,u_1)-f_1(t,u_2)\|_{E_\gamma}\leq C(T)\big(\|u_1-u_2\|_{E_\xi}^{\vartheta_1}+\|u_1\|_{E_\xi}^{q_1-\vartheta_1}\|u_1-u_2\|_{E_\xi}^{\vartheta_2} \big)
\end{equation*}
with 
$$\vartheta_1:=1=:q_1\,,\quad 0\leq \vartheta_2:= 4\ve/n\leq q_2:=1+\vartheta_2\,.
$$
The largest value $q$ of the exponents $q_j$ is easily computed as
\[
q:=
\left\{
\begin{array}{ll}
1\,,&\nu\in(1,2]\,,\\[1ex]
1+4\ve/n\,,&\nu=1\,,
\end{array}
\right.
\]
and, since $n\geq 2$ and $2 \ve< 1/7$,  it holds that $\xi<1/q=\min\big\{1,1/q\big\}$.\\

This shows that  assumption~\eqref{ASS} is fulfilled in the context of \eqref{EP:A1} for each $T>0$.
We may  thus apply Corollary~\ref{MT2}~(a) with 
 $\alpha:=\ve\in [0,1/2-\ve)=[0,\min\{1,1+\gamma-\xi\})$   and deduce, for each~${u^0\in E_\alpha =L_2(\Omega)}$, the existence of  a global strong solution  
\begin{equation*} 
 u\in C^1\big((0,\infty),E_0\big)\cap C\big((0,\infty),E_1\big)\cap  C \big([0,\infty),E_\alpha\big) 
\end{equation*}
to ~\eqref{EP:A1}.\\

Regarding the uniqueness claim for $\nu=2$ stated in~(i), let    $u_1,\, u_2$ be  solutions to \eqref{Bush}  as found in Theorem~\ref{T:A1} such that $u_1(0)=u_2(0)$ and choose an arbitrary $T>0$. 
Using the Lions-Magenes  lemma, see \cite[Theorem~II.5.12]{BF13}, 
we  have for $0<\tilde t< t\leq T$ 
\begin{align*}
\|(u_1 -u_2)(t)\|_2^2-\|(u_1 -u_2)(\tilde t)\|_2^2&=-2\int_{\tilde t}^t\|\nabla (u_1-u_2)(\tau)\|_2^2\,{\rm d}\tau\\[1ex]
&\quad +2\int_{\tilde t}^t\int_\Omega  (u_1-u_2)(\tau) \big[f_2(\tau ,u_1(\tau))-f_2(\tau ,u_2(\tau))\big]\,{\rm d}x\,{\rm d}\tau\,.
\end{align*}
Since $\|f_2(\tau ,u_1(\tau))-f_2(\tau ,u_2(\tau))\|_2\leq C(T)\|(u_1-u_2)(\tau)\|_{H^1}$  for $ \tau\in(0, T]$ due to Lemma~\ref{L:A1} and~Lemma~\ref{L:A2}, 
 H\"older's inequality, Young's inequality, and the previous estimate lead, after taking the limit~$\tilde t\to0$, to
\begin{align*}
\|(u_1 -u_2)(t)\|_2^2\leq C(T)\int_0^t\|(u_1 -u_2)(\tau)\|_2^2\,{\rm d}\tau\,,\qquad 0\leq t\leq T\,.
\end{align*}
The desired assertion  follows now  from Gronwall's lemma.
\\

 Finally, let  $\Theta\in C^\vartheta([0,\infty), L_2(\Omega))$ and   $ \omega \in C^\vartheta([0,\infty), L_\infty(\Omega,\R^n)) $ for some $\vartheta\in (0,1)$ and fix~${\delta\in(0,1)}$.
Since $f(\cdot,u)\in C([\delta,\infty), L_2(\Omega))$, $u(\delta)\in H^{3/2}_0(\Omega)$,  and $\Delta\in \mathcal{H}(H^2_0(\Omega),L_2(\Omega))$, we infer from  \cite[II.Theorem~5.3.1]{LQPP}  that 
$u\in C^{\rho}([\delta,\infty), E_\xi)$ for some $\rho\in(0,1)$.
This property together with Lemma~\ref{L:A1}-Lemma~\ref{L:A3} show that $f(\cdot,u)\in C^{\rho'}([\delta,\infty), L_2(\Omega))$ for some ~${\rho'\in(0,\rho)}$.
 In view of \cite[II.Theorem~1.2.1]{LQPP}  we get $u\in C^1\big((\delta,\infty),L_2(\Omega)\big)\cap C\big((\delta,\infty),H^{2}_0(\Omega)\big)$ 
  and, since $\delta\in(0,1)$ is chosen arbitrarily, this establishes~(ii), 
and  the proof of Theorem~\ref{T:A1} is complete.\qed

 \subsection{Dynamics of an isothermal, autocatalytic chemical reaction scheme}\label{Sec:33}

 We consider a mathematical model  for the dynamics of an isothermal, autocatalytic chemical
reaction scheme with termination, taking place in an unstirred environment and undergoing
molecular diffusion  investigated in a one-dimensional setting in \cite{Meyer}.
 The governing equations are summarized by the system
 \begin{subequations}\label{acs}
\begin{align}
\partial_t u-\Delta u &=- u_+^\mu  v_+^\beta\,,&&\hspace{-1cm}t>0\,,\quad x\in\Omega\,,\label{ac1}\\
\partial_t v-\Delta v &= u_+^\mu v_+^\beta-a  v_+^\theta\,,&& \hspace{-1cm} t>0\,,\quad x\in\Omega\,,\label{ac2}
\end{align}
where $u$ and $v$ are concentrations of the  
reactant   and the autocatalyst, respectively,~$a $ is a (positive) constant, and the constants $\mu,\,\beta,\,\theta\in (0,1]$ are assumed to satisfy $\mu+\beta \le 1$.
As before, we set~${r_+:=\max\{0, r\}}$ for~${r\in\R}$.
Moreover,  $\Omega\subset\R^n$, $n\in\N^*$,  is an open and bounded set with  smooth boundary  and outward unit normal $\nu$.

We supplemented \eqref{ac1}-\eqref{ac2} by homogeneous Neumann boundary conditions
\begin{equation}\label{ac3}
\p_\nu u=\p_\nu v=0\qquad \text{on $\p\Omega$}\,,\quad t>0\,, 
\end{equation}
 and impose the initial condition
\begin{equation}\label{ac4}
u(0,x)=u^0(x)\,,\quad v(0,x)=v^0(x)\,,\qquad x\in\Omega\,. 
\end{equation}
\end{subequations}

In the context of the evolution problem~\eqref{acs} we chose the Hilbert spaces $H^{2\theta}_N(\Omega)$, which are defined by \eqref{f2}  with~${\mathcal{B}u=\p_\nu u}$, to establish the following result.

\begin{thm}\label{T:A2} Let~$a \in\R$ and  $\mu,\,\beta,\,\theta\in (0,1]$ satisfy $\mu+\beta\leq 1.$
 Then, given $u^0\in L_2(\Omega)$, there exists a global  strong solution to \eqref{acs} such that
 \begin{equation}\label{regsol2}
  (u,v)\in C^1\big((0,\infty),L_2(\Omega,\R^2)\big)\cap C\big((0,\infty),H^{2}_N(\Omega, \R^2)\big)\cap  C \big([0,\infty),L_2(\Omega, \R^2)\big)\,.
  \end{equation}
 \end{thm}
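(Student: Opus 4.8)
The plan is to recast \eqref{acs} into the abstract framework of Section~\ref{Sec:2} and to invoke Corollary~\ref{MT2}~(b). To this end I set $E_0:=L_2(\Omega,\R^2)$ and $E_1:=H^2_N(\Omega,\R^2)$ and let $A:=\mathrm{diag}(\Delta_N,\Delta_N)$ be the componentwise Neumann Laplacian, which by \eqref{f2x} belongs to $\mathcal H(E_1,E_0)$ and generates an analytic semigroup. Fixing a small $\xi\in(0,1)$ (so that $2\xi<3/2$ and the interpolation space sees no boundary condition), the reiteration property \eqref{f3} gives $E_\xi=H^{2\xi}_N(\Omega,\R^2)=H^{2\xi}(\Omega,\R^2)$, and I take $\gamma:=0$, i.e. $E_\gamma=E_0$; this choice of $\gamma$ is forced, since the nonlinearity gains no spatial regularity. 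Writing $w=(u,v)$, the system then reads $w'=Aw+f(w)$ with the \emph{autonomous} nonlinearity $f(w):=\big(-u_+^\mu v_+^\beta,\,u_+^\mu v_+^\beta-a\,v_+^\theta\big)$, and it remains to verify \eqref{ASS} and \eqref{a1B'} and to identify the admissible initial regularity.

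For the linear growth \eqref{a1B1} I would use the pointwise estimate $u_+^\mu v_+^\beta\le(u_++v_+)^{\mu+\beta}\le 1+|u|+|v|$, valid because $\mu+\beta\le1$, together with $v_+^\theta\le1+|v|$; taking $L_2$-norms and using $E_\xi\hookrightarrow E_0$ yields $\|f(w)\|_0\le C(1+\|w\|_0)\le C(1+\|w\|_\xi)$. The crux is the H\"older bound \eqref{a1B2}, which I verify for two elements $w,\tilde w\in E_\xi$ with $\tilde w=(\tilde u,\tilde v)$. Starting from the scalar inequalities $|r_+-s_+|\le|r-s|$ and $|r_+^\sigma-s_+^\sigma|\le|r-s|^\sigma$ for $\sigma\in(0,1]$, I split
\[
\big|u_+^\mu v_+^\beta-\tilde u_+^\mu\tilde v_+^\beta\big|\le v_+^\beta\,|u-\tilde u|^\mu+\tilde u_+^\mu\,|v-\tilde v|^\beta,
\]
and estimate each product in $L_2$ by the three-factor H\"older inequality with exponents $1/\mu$, $1/\beta$, $1/(1-\mu-\beta)$ (the third factor being the constant $1$, the two-factor version being used when $\mu+\beta=1$). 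This uses $\mu+\beta\le1$ decisively and gives, for instance,
\[
\big\|\,v_+^\beta\,|u-\tilde u|^\mu\,\big\|_2\le|\Omega|^{(1-\mu-\beta)/2}\,\|v\|_2^\beta\,\|u-\tilde u\|_2^\mu.
\]
The term $a\,(v_+^\theta-\tilde v_+^\theta)$ is handled by $\|v_+^\theta-\tilde v_+^\theta\|_2\le\|\,|v-\tilde v|^\theta\|_2\le C\|v-\tilde v\|_2^\theta$, using $L_2\hookrightarrow L_{2\theta}$ on the bounded domain $\Omega$. Bounding all $L_2$-norms by $E_\xi$-norms, these combine to \eqref{a1B2} with the exponent pairs $(\vartheta,q)=(\mu,\mu+\beta)$, $(\beta,\mu+\beta)$, $(\theta,\theta)$; in particular $q=\max\{\mu+\beta,\theta\}\le1$, so $\xi<1=\min\{1,1/q\}$ as required, and the continuity of $f$ on $E_\xi$ follows from the same estimates.

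Since $f$ is autonomous, $\|f(t,w)-f(s,w)\|_0=0$, so \eqref{a1B'} holds trivially with any $\vartheta_0\in(0,1)$. With \eqref{ASS} and \eqref{a1B'} verified for every $T_0>0$, and with the initial datum $(u^0,v^0)\in L_2(\Omega,\R^2)=E_0=E_\alpha$ for $\alpha=0\in[0,1-\xi)$, Corollary~\ref{MT2}~(b) applies and delivers a global strong solution $w\in C^1((0,\infty),E_0)\cap C((0,\infty),E_1)\cap C([0,\infty),E_0)$, which is precisely the regularity \eqref{regsol2}. The main obstacle is the verification of \eqref{a1B2}: one must close the H\"older estimate for the product $u_+^\mu v_+^\beta$ purely in $L_2$, and it is exactly the structural hypothesis $\mu+\beta\le1$ that renders the multi-factor H\"older inequality applicable; everything else is routine bookkeeping and a direct appeal to the abstract machinery.
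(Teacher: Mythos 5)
Your proof is correct, but it takes a genuinely different route through the abstract machinery than the paper does. The paper exploits the fact that $f$ requires no spatial regularity of its argument and sets $\xi=\gamma=0$: it verifies the linear bound \eqref{linbound} and the local H\"older estimate \eqref{w2} entirely in $L_2$ (via $\|u_+^\mu v_+^\beta\|_2\le|\Omega|^{(1-\mu-\beta)/2}\|u\|_2^\mu\|v\|_2^\beta$ and $|a_+^\vartheta-b_+^\vartheta|\le|a-b|^\vartheta$, see \eqref{estts}--\eqref{GL} --- the same multi-factor H\"older mechanism you use) and then invokes the \emph{first} alternative of Corollary~\ref{MT2}~(b), since $\alpha=0\in[\xi,1)$ when $\xi=0$; this rests only on Proposition~\ref{MT0} iterated, and Theorem~\ref{MT1} and hypothesis \eqref{ASS} (which in fact forces $\xi>0$) never enter. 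You instead fix a small $\xi>0$, so that $u^0\in E_0\setminus E_\xi$ forces you into the \emph{second} alternative $\alpha=0\in[0,1-\xi)$, which runs through Theorem~\ref{MT1}; your verification of \eqref{a1B2} with exponent pairs $(\mu,\mu+\beta)$, $(\beta,\mu+\beta)$, $(\theta,\theta)$ is correct, as is the observation that $q=\max\{\mu+\beta,\theta\}\le1$ makes the constraint $\xi<\min\{1,1/q\}$ harmless, and \eqref{a1B'} is indeed vacuous for autonomous $f$. One point you should make explicit: \eqref{w2} is a \emph{standing} hypothesis of Corollary~\ref{MT2}~(b) (it is what powers the continuation via Proposition~\ref{MT0} beyond $T_0$), and you never verify it; fortunately it follows in one line from your $E_\xi$-H\"older estimates, since on $\bar{\mathbb{B}}_{E_\xi}(0,R)$ one may lower all exponents to $\vartheta_0:=\min\{\mu,\beta,\theta\}$ using $\|w-\tilde w\|_\xi\le 2R$, and the time increment is absent --- so this is an omission, not a gap. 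In sum, both routes land on exactly \eqref{regsol2}; the paper's choice $\xi=0$ buys a lighter argument that bypasses Theorem~\ref{MT1} altogether, while your route shows the stronger structural hypotheses \eqref{ASS} hold on $H^{2\xi}(\Omega,\R^2)$, which is more than this theorem needs but would be the template to follow if the nonlinearity involved, say, gradient terms as in Section~\ref{Sec:32}.
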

\begin{proof}
Set 
\[
E_0:=L_2(\Omega, \R^2)\,,\qquad E_1:=H^2_N(\Omega,\R^2)\,,
\]
 and $\xi:=\gamma:=0$.
The evolution problem~\eqref{acs} may be recast 
as the semilinear parabolic Cauchy problem
\begin{equation}\label{EP:A2}
w'=A  w+f(w)\,,\quad t>0\,,\qquad w(0)=w^0:=(u^0,v^0)\,,
\end{equation}
where  
$$
A:=\begin{pmatrix}
\Delta&0\\[1ex]
0&\Delta
\end{pmatrix}\in \mathcal{H}(E_1,E_0)\,,
$$
see  \cite[I.~Theorem 1.6.1]{LQPP}  and \eqref{f2x}-\eqref{f2}, and $f:E_0\to E_0$ is given by   
\[
f(w):=(- u_+^\mu  v_+^\beta,u_+^\mu  v_+^\beta-a  v_+^\theta),\qquad w=(u,v)\in E_0\,.
\]
In fact, using H\"older's inequality, for $w=(u,v)\in E_0$ we have
\begin{equation}\label{estts}
\begin{aligned}
&\|v_+^\theta\|_2\leq |\Omega|^{(1-\theta)/2}\|v\|_2^\theta\leq |\Omega|^{(1-\theta)/2}\|w\|_{E_0}^\theta\,,\\
&\|u_+^\mu  v_+^\beta\|_2\leq |\Omega|^{(1-\beta-\mu)/2}\|u\|_2^\mu\|v\|_2^\beta\leq |\Omega|^{(1-\beta-\mu)/2}\|w\|_{E_0}^{\mu+\beta}\,,
\end{aligned}
\end{equation}
which shows that $f$ is well-defined and satisfies
\begin{equation}\label{LG}
\|f(w)\|_{E_0}\le C\big(1+\|w\|_{E_0}\big),\qquad w\in E_0\,. 
\end{equation}
Moreover, \eqref{estts} together with the inequality 
$$
|a_+^\vartheta-b_+^\vartheta|\leq |a-b|^\vartheta\,,\qquad a,\, b\in\R\,,\quad \vartheta\in(0,1]\,,
$$
shows that there exists a constant $C>0$ such  that for all $w,\, \tilde w\in E_0$ we have
\begin{equation}\label{GL}
\|f(w)-f(\tilde w)\|_{E_0}\leq C(\|w-\tilde w\|_{E_0}^\theta+\|w\|_{E_0}^\mu\|w-\tilde w\|_{E_0}^\beta+\|w\|_{E_0}^\beta\|w-\tilde w\|_{E_0}^\mu)\,.
\end{equation}
In view of \eqref{LG} and \eqref{GL}, we may thus apply Corollary~\ref{MT2}~(b) (with $\alpha=\xi =\gamma=0$) in the context of  the evolution problem~\eqref{EP:A2} to deduce, for each~$w^0\in E_0$,
 the existence of a strong solution to~\eqref{acs} with the required regularity 
property~\eqref{regsol2}.
\end{proof}

There are other examples to which the abstract results Theorem~\ref{MT1} and Corollary~\ref{MT2} apply, e.g., to reaction–diffusion equations of the form
$$
\partial_t u-\Delta u =u\vert u\vert^{p-1}\,, \qquad t>0\,,\quad x\in\Omega\,,
$$
with $p\in (0,1)$,  see \cite{ClarkMeyer} and the references therein. 
In this case, the function   $f:L_2(\Omega)\to L_2(\Omega)$ with~${f(u):=u\vert u\vert^{p-1}}$ is linearly bounded
 and uniformly $p$--H\"older continuous.

\bibliographystyle{siam}
\bibliography{Literature}
\end{document}